\newtheorem{theorem}{Theorem}
\newtheorem*{theorem*}{Theorem}
\newtheorem{lemma}[theorem]{Lemma}
\newtheorem{proposition}[theorem]{Proposition}
\newtheorem{claim}[theorem]{Claim}
\newtheorem{corollary}[theorem]{Corollary}
\newtheorem*{question*}{Question}
\newtheorem{remark}[theorem]{Remark}
\newtheorem{maintheorem}{Theorem}
\theoremstyle{definition}
\newtheorem*{definition*}{Definition}
\newtheorem*{lemma*}{Lemma}
\numberwithin{equation}{section}
\numberwithin{theorem}{section}
\newcommand{\R}{\mathbb{R}}
\newcommand{\Z}{\mathbb{Z}}
\newcommand{\eps}{\varepsilon}
\DeclareDocumentCommand\Pr{ m g }{\ensuremath{
    {   \IfNoValueTF {#2}
      {\mathbb{P}\left[{#1}\right]}
      {\mathbb{P}\left[{#1}\middle\vert{#2}\right]}%
    }
}}
\DeclareDocumentCommand\E{ m g }{\ensuremath{
    {   \IfNoValueTF {#2}
      {\mathbb{E}\left[{#1}\right]}
      {\mathbb{E}\left[{#1}\middle\vert{#2}\right]}%
    }
}}
\def\ee{\mathrm{e}}
\begin{document}

\title[]{Asymptotic R\'enyi Entropies of Random Walks on Groups}

\author[]{Kimberly Golubeva}
\author[]{Minghao Pan}
\author[]{Omer Tamuz}
\address{California Institute of Technology}

\thanks{This work was supported by a National Science Foundation CAREER award (DMS-1944153). The first author was supported by the Canadian Natural Sciences and Engineering Research Council (NSERC) PGS-D Scholarship [funding reference number 567723]. }
\date{\today}

\begin{abstract}

We introduce asymptotic R\'enyi entropies as a parameterized family of invariants for random walks on groups. These invariants interpolate between various well-studied properties  of the random walk, including the growth rate of the group, the Shannon entropy, and the spectral radius. They furthermore offer large deviation counterparts  of the Shannon-McMillan-Breiman Theorem. We prove some basic properties of asymptotic R\'enyi entropies that apply to all groups, and discuss their analyticity and positivity for the free group and lamplighter groups.


\end{abstract}

\maketitle
\section{Introduction}

The Avez entropy (or asymptotic Shannon entropy) of a random walk on a group is an essential tool for understanding its asymptotic properties, and in particular the Furstenberg-Poisson boundary \cite{avez1974theorem, kaimanovich1983random}. It is also useful for studying geometric properties of groups; for example,  it is always positive for non-amenable groups, and zero for sub-exponential groups.

We introduce asymptotic R\'enyi entropies of a random walk on a finitely generated group. This is a family of invariants that generalizes the Avez entropy, as well as other useful invariants such as the growth rate of the group and the spectral radius of the walk. R\'enyi entropies originated in information theory as a general way to quantify randomness, beyond Shannon entropy \cite{renyi1961measures}. They share some (but not all) of the useful properties of the Shannon entropy, including additivity for product measures and monotonicity under push-forwards, which makes them useful in the setting of  random walks on groups.\footnote{See \cite{csiszar2008axiomatic,principe2010information,mu2021blackwell} for axiomatic treatments of R\'enyi entropies and the related R\'enyi divergences. The axiomatization in the latter implies that R\'enyi divergences are the extreme points in the set of all divergences that are additive and monotone under push-forwards. A similar result applies to R\'enyi entropies.}

Let $\nu$ be a finitely supported probability distribution on a countable set $\Omega$. For $\alpha \in (0,\infty) \setminus \{1\}$, the {\em $\alpha$-R\'enyi entropy} of $\nu$ is given by
\begin{align*}
    H_\alpha(\nu) = \frac{1}{1-\alpha}\log\sum_{\omega \in \Omega}\nu(\omega)^\alpha.
\end{align*}
Letting $H_1(\nu)$ be the Shannon entropy makes $\alpha \mapsto H_\alpha(\nu)$ a continuous map at $\alpha=1$. Likewise, letting $H_0(\nu)$ be the logarithm of the size of the support and $H_\infty(\nu) = -\max_\omega\log \nu(\omega)$ extends this map to a continuous one defined on the domain $[0,\infty]$.

Let $\mu$ be a finitely supported probability measure on a group $G$, and denote by $\mu^{(n)}$ the $n$-fold convolution of $\mu$. We say that $\mu$ is non-degenerate if its support generates $G$ as a semigroup. For $\alpha \in [0,\infty]$, the \emph{asymptotic $\alpha$-R\'enyi entropy} of the $\mu$-random walk on $G$ is
\begin{align*}
    h_\alpha(\mu) = \lim_{n \to \infty}\frac{1}{n}H_\alpha(\mu^{(n)}).
\end{align*}
As we explain, this limit exists for every $\mu$. For non-degenerate $\mu$, it is easy to see that $h_0(\mu)$ is the exponential growth rate of $G$,  $h_1(\mu)$ is the Avez entropy, and  that for symmetric random walks, $h_\infty(\mu)$ is minus the logarithm of the spectral radius (Claim~\ref{clm:spectral}).

We begin by establishing some general properties that apply to all finitely supported measures on groups. 
\begin{maintheorem}
\label{thm:main-cont}
    Let $\mu$ be a finitely supported probability measure on a group $G$, and consider the map $\alpha \mapsto h_\alpha(\mu)$.

    \begin{enumerate}
        \item For $\alpha$ in $[0,1]$, $h_\alpha(\mu)$ is continuous and decreasing. It is strictly decreasing for all $\alpha$ such that $h_\alpha(\mu) > h_1(\mu)$. 
        \item For $\alpha \in (1,\infty]$, $h_\alpha(\mu)$ is continuous and decreasing.
        \item For symmetric $\mu$ and $\alpha \in (2,\infty)$, $h_\alpha(\mu) = \frac{\alpha}{\alpha-1}h_\infty(\mu)$.
    \end{enumerate} 
\end{maintheorem}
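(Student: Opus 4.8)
The plan is to route everything through the single auxiliary function $\Lambda(\alpha) := (1-\alpha)\,h_\alpha(\mu) = \lim_n \tfrac1n\log\sum_{g}\mu^{(n)}(g)^\alpha$, since on $(0,1)\cup(1,\infty)$ the map of interest is recovered as $h_\alpha = \Lambda(\alpha)/(1-\alpha)$. Two structural facts do the heavy lifting. First, for each fixed $n$ the function $\alpha\mapsto\log\sum_g\mu^{(n)}(g)^\alpha$ is a log-sum-exp of affine functions of $\alpha$, hence convex; a pointwise limit of convex functions is convex, so $\Lambda$ is convex on $(0,\infty)$, and in particular continuous there. Second, since $\mu^{(m+n)}$ is the pushforward of $\mu^{(m)}\otimes\mu^{(n)}$ under multiplication, and $H_\alpha$ is additive on products and non-increasing under pushforwards, the sequence $H_\alpha(\mu^{(n)})$ is subadditive, whence $h_\alpha = \inf_n \tfrac1n H_\alpha(\mu^{(n)})$. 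As an infimum of functions that are continuous in $\alpha$ on all of $[0,\infty]$ (using the stated conventions at $0,1,\infty$), $h_\alpha$ is upper semicontinuous in $\alpha$; and since $H_\alpha(\nu)$ is non-increasing in $\alpha$ for every $\nu$, so is $h_\alpha$. This gives the ``decreasing'' halves of (1) and (2), and, for an upper semicontinuous non-increasing function, it yields left-continuity at every point: the left limit is $\ge h_{\alpha_0}$ by monotonicity and $\le h_{\alpha_0}$ by upper semicontinuity, so they agree.

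For (1), continuity on $(0,1)$ is immediate from convexity of $\Lambda$ and $\tfrac1{1-\alpha}$ being smooth and nonzero there, while left-continuity at $1$ is the semicontinuity-plus-monotonicity fact, so $\lim_{\alpha\to1^-}h_\alpha=h_1$; equivalently $\Lambda'_-(1)=-h_1$ (recall $\Lambda(1)=0$). Right-continuity at $0$ I would obtain by sandwiching $\Lambda$: from $\mu^{(n)}(g)\le1$ we get $\sum_g\mu^{(n)}(g)^\alpha\le |\mathrm{supp}\,\mu^{(n)}|$, hence $\Lambda(\alpha)\le\Lambda(0)$, while every support element has mass at least $p_{\min}^n$ ($p_{\min}$ the least positive value of $\mu$), giving $\sum_g\mu^{(n)}(g)^\alpha\ge |\mathrm{supp}\,\mu^{(n)}|\,p_{\min}^{n\alpha}$ and so $\Lambda(\alpha)\ge\Lambda(0)+\alpha\log p_{\min}$; letting $\alpha\to0^+$ pins $\Lambda(0^+)=\Lambda(0)$. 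For strict monotonicity I would argue by convex rigidity: if $h$ failed to be strictly decreasing on $\{\alpha:h_\alpha>h_1\}$, then $h_{\alpha_1}=h_{\alpha_2}=:c$ for some $\alpha_1<\alpha_2$ there, forcing the points $(\alpha_1,(1-\alpha_1)c)$, $(\alpha_2,(1-\alpha_2)c)$, $(1,0)$ to be collinear on $y=c(1-x)$, hence $\Lambda$ affine on $[\alpha_1,1]$ with slope $-c=-h_{\alpha_1}<-h_1$; this contradicts $\Lambda'_-(1)=-h_1$. Part (2) is handled identically: continuity on $(1,\infty)$ follows from convexity of $\Lambda$ (now $1-\alpha<0$ but nonzero), and continuity at $\alpha=\infty$ is again just left-continuity from semicontinuity and monotonicity. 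No continuity across $\alpha=1$ is ever needed, which is exactly why the two domains are split.

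For (3), fix symmetric $\mu$ and put $M_n=\max_g\mu^{(n)}(g)$. Cauchy--Schwarz gives $\max_g\mu^{(2n)}(g)=\mu^{(2n)}(e)=\sum_g\mu^{(n)}(g)^2$, and for $\alpha\ge2$ I would sandwich $M_n^\alpha\le\sum_g\mu^{(n)}(g)^\alpha\le M_n^{\alpha-2}\sum_g\mu^{(n)}(g)^2$. Applying $\tfrac1n\log$ and $n\to\infty$, both ends converge to $\alpha\log\rho$, using $\tfrac1n\log M_n\to-h_\infty=\log\rho$ and $\tfrac1n\log\mu^{(2n)}(e)\to2\log\rho$ (the spectral-radius identity of Claim~\ref{clm:spectral}); hence $\Lambda(\alpha)=-\alpha h_\infty$ and $h_\alpha=\Lambda(\alpha)/(1-\alpha)=\tfrac{\alpha}{\alpha-1}h_\infty$.

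The step I expect to be the main obstacle is the strict-monotonicity clause of (1): it is the one place where soft convexity and semicontinuity are not enough, since one must pin the left derivative $\Lambda'_-(1)$ to be exactly $-h_1$ (rather than merely a subgradient) and then exploit the rigidity of equality in the convexity inequality. Establishing $\Lambda'_-(1)=-h_1$ is the crux, and it is precisely what left-continuity at $1$ — obtained from the infimum representation $h_\alpha=\inf_n\tfrac1n H_\alpha(\mu^{(n)})$ — delivers.
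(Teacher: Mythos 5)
Your proof is correct, and most of it takes the same route as the paper's. Your $\Lambda(\alpha)$ is exactly the paper's $k_\mu(1-\alpha)$, the Fekete limit of the normalized cumulant generating functions of the log-likelihood, and its convexity gives continuity away from $\alpha=1$; your derivation of left-continuity at $1$ and at $\infty$ from the infimum representation (upper semicontinuity) together with monotonicity is precisely the paper's c\`agl\`ad argument; and your part (3) sandwich $M_n^\alpha \le \sum_g \mu^{(n)}(g)^\alpha \le M_n^{\alpha-2}\sum_g \mu^{(n)}(g)^2$, together with $\max_g\mu^{(2n)}(g) = \mu^{(2n)}(e) = \sum_g\mu^{(n)}(g)^2$, is the same two-sided estimate the paper carries out along even times. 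The one genuinely different step is the strict-monotonicity clause of (1). The paper proves a separate convexity property of R\'enyi entropies (Proposition~\ref{prop:convex}: $\beta \mapsto H_{1+1/\beta}(\nu)$ is convex on $(-\infty,-1)$, via H\"older's inequality with negative exponents) and deduces that the increasing convex function $\beta \mapsto h_{1+1/\beta}(\mu)$ is strictly increasing wherever it exceeds its infimum $h_1(\mu)$. You instead stay entirely with $\Lambda$: since the difference quotient of $\Lambda$ at $1$ equals $-h_\alpha$, left-continuity at $1$ is equivalent to $\Lambda'_{-}(1) = -h_1$, and if $h_{\alpha_1} = h_{\alpha_2} = c > h_1$ with $\alpha_1 < \alpha_2$, then $\Lambda$ meets the line $c(1-\alpha)$ at the three points $\alpha_1 < \alpha_2 < 1$, hence is affine on $[\alpha_1,1]$ and has left derivative $-c \ne -h_1$ at $1$, a contradiction. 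This convex-rigidity argument is correct and more economical---it reuses the convexity already needed for continuity and makes Proposition~\ref{prop:convex} unnecessary for the theorem---whereas the paper's route produces Proposition~\ref{prop:convex} itself, a statement about arbitrary measures that the authors flag as being of independent interest. Two minor points to tidy: if $\alpha_1 = 0$, first replace $\alpha_1$ by a point of $(0,\alpha_2)$ (possible since monotonicity forces $h$ to be constant on $[\alpha_1,\alpha_2]$), so that you only ever invoke convexity of $\Lambda$ on the open half-line; and your sandwich for right-continuity at $0$ is a mild variant of the paper, which instead gets continuity at $\alpha=0$ from continuity of the convex function $k_\mu$ on all of $\R$.
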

A few observations are in order. Part (1) implies that $h_\alpha(\mu)$ interpolates continuously between the exponential growth rate $h_0(\mu)$ and the Avez entropy $h_1(\mu)$, yielding a non-trivial family of invariants indexed by $\alpha \in [0,1]$. Parts (1) and (2) together imply that $h_\alpha(\mu)$ is everywhere (weakly) decreasing, and continuous except possibly at $\alpha=1$. As we shall see, it is possible to have a discontinuity there. In particular, there will be a discontinuity for every symmetric, positive entropy random walk on an amenable group. We also note that by Theorem~\ref{thm:main-free} below, while $h_\alpha$ is continuous on $(1,\infty)$, it is not always twice-differentiable in this range. We do not know if it is always differentiable. For symmetric random walks, part (3) shows that asymptotic R\'enyi entropies have a trivial form in the range $\alpha \geq 2$. The proof of part (1) uses two convexity properties of R\'enyi entropies (under re-parameterization), including a novel one which we show in  Proposition~\ref{prop:convex}.

\begin{figure}
    \centering
    \includegraphics[scale=0.75]{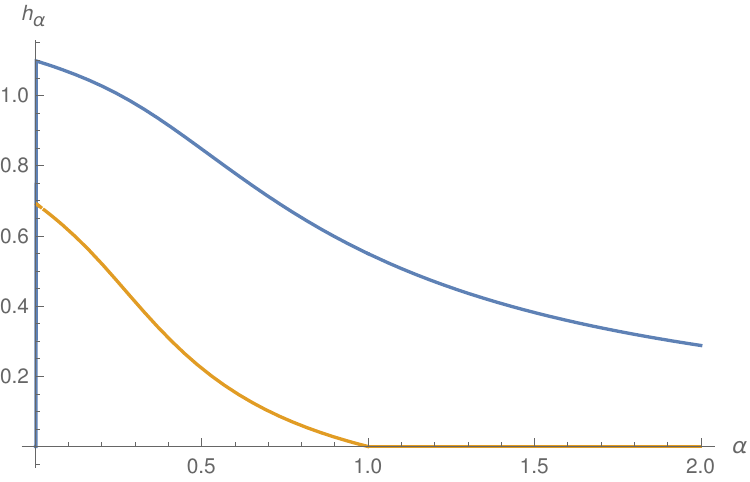}
    \caption{R\'enyi entropies for the simple random walk on the free group with two generators (blue, higher) and the switch-walk-switch walk on the lamplighter group (orange, lower). For both, $h_0$ is the exponential growth rate of the group, and $h_1$ is the Avez entropy. Elementary formulas for these graphs are given in \eqref{eq:free_entropy} and \eqref{eq:sws}.
    \label{fig:entropy}}
\end{figure}

Figure~\ref{fig:entropy} shows the asymptotic R\'enyi entropies of the simple random walk on the free group and the switch-walk-switch walk on the lamplighter group; we calculate these explicitly below. This graph illustrates some properties of asymptotic R\'enyi entropies that hold more generally: (i) $h_\alpha$ is (weakly) decreasing in $\alpha$, (ii) for symmetric random walks on non-amenable groups, $h_\alpha$ is positive for all $\alpha \in [0,\infty]$, (iii) for symmetric random walks on amenable groups, $h_\alpha$ vanishes on $(1,\infty]$ (Corollary~\ref{claim:amenable}). In both of these graphs, $h_\alpha$ is continuous. Theorem~\ref{thm:main-cont} shows that this is generally the case, except perhaps at $\alpha=1$. 

\subsection*{The asymptotic min-entropy}
The R\'enyi entropy $H_\infty$ is known as the min-entropy. The asymptotic min-entropy is $h_\infty = \lim_n-\frac{1}{n}\log\max_g \mu^{(n)}(g)$ is the exponential rate of decay of the largest atom in $\mu^{(n)}$. For symmetric random walks it is well-known that this maximum is achieved at the identity (at even times), and that $h_\infty$ is minus the logarithm of the norm of the Markov operator (see, e.g., \cite[Proposition 4.4.9]{lalley2023random}). Thus---for symmetric non-degenerate random walks---it follows from Kesten's Theorem~\cite{kesten1959symmetric} that $h_\infty$ vanishes if and only if the group is amenable. For non-symmetric random walks, we give an example of a walk on an amenable group for which $h_\infty>0$ (Claim~\ref{clm:h-infty-amenable}). For non-amenable groups we show that  $h_\infty>0$ for every non-degenerate random walk, including the non-symmetric ones (Claim~\ref{clm:h-infty-nonamenable}).


\subsection*{The log-likelihood process}
Fix a group $G$ and a finitely supported random walk $\mu$. Let $X_1,X_2,\ldots$ be random variables distributed i.i.d.\ $\mu$, and let $Z_n = X_1 \cdot X_2 \cdots X_n$, so that $Z_n$ has distribution $\mu^{(n)}$, and $Z_1,Z_2,\ldots$ is the $\mu$-random walk on $G$. Define the \emph{log-likelihood} process by $L_n = -\log\mu^{(n)}(Z_n)$. Then the Shannon entropy of $Z_n$ is the expectation of $L_n$, and the Shannon-McMillan-Breiman Theorem is the SLLN for the process $(L_n)_n$. 

R\'enyi entropies are, up to a reparametrization, the cumulant generating function of $L_n$:
\begin{align*}
    K_{L_n}(t) = \log\E{\ee^{t L_n}} = t H_{1-t}(\mu^{(n)}).
\end{align*}
Hence the asymptotic R\'enyi entropies of the random walk capture the asymptotics of the moment generating functions of the log-likelihoods. It follows that  the R\'enyi entropies are useful for establishing large deviation bounds for $\frac{1}{n}L_n$. In particular,  whenever $(1-\alpha)h_\alpha(\mu)$ is strictly convex its Legendre transform yields a rate function for large deviations of $\frac{1}{n}L_n$. Likewise, positivity of $h_\alpha(\mu)$ yields a Chernoff bound.

These observations lead us to study the positivity and convexity of asymptotic R\'enyi entropies. Likewise, we are interested in deviations from continuity and analyticity, as representing phase transitions.

\subsection*{Positivity of asymptotic R\'enyi entropies} For symmetric random walks on amenable groups, we use standard results to show that $h_\alpha(\mu)=0$ whenever $\alpha>1$. For non-amenable groups and non-degenerate $\mu$, regardless if the random walks are symmetric or not, $h_\alpha(\mu) > 0$ for all $\alpha \in [0,\infty]$.

Amenable groups with exponential growth will have $h_0(\mu)>0$ for all non-degenerate $\mu$. If the Avez entropy $h_1(\mu)$ is also positive, then $h_\alpha(\mu)$ will be positive on $[0,1]$, since $h_\alpha(\mu)$ is decreasing. However, if $h_1(\mu)=0$ then it is possible that $h_\alpha(\mu)$ vanishes already for  some $\alpha < 1$. We offer this as an open question:
\begin{question*}
Does there exist a non-degenerate $\mu$ on a group with an exponential growth rate for which $h_\alpha(\mu)=0$ for some $\alpha\in (0,1)$?    
\end{question*}
 While a natural candidate would be the lamplighter group, we show that this is not the case. 
\begin{maintheorem}\label{thm:main-ll-positivity}
For any non-degenerate,  symmetric, finitely supported probability measure $\mu$ on the lamplighter group $G = L \wr \Z$  with lamps in a non-trivial finite group $L$, $h_\alpha(\mu)>0$ for all $\alpha\in (0,1)$.
\end{maintheorem}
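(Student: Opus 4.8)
The plan is to bound $h_\alpha(\mu)$ from below directly, by producing exponentially many group elements that carry comparable mass under $\mu^{(n)}$. Since $h_\alpha(\mu)=\frac{1}{1-\alpha}\lim_n\frac1n\log\sum_g\mu^{(n)}(g)^\alpha$ and $\frac{1}{1-\alpha}>0$ for $\alpha\in(0,1)$, it suffices to show that $\sum_g\mu^{(n)}(g)^\alpha$ grows exponentially in $n$. Note that the general results of Theorem~\ref{thm:main-cont} do not settle this: the base walk on $\Z$ is symmetric with zero mean, hence recurrent, so the Avez entropy $h_1(\mu)$ vanishes, and continuity together with monotonicity is consistent with $h_\alpha$ reaching $0$ strictly before $\alpha=1$. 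The mechanism I would exploit is a competition between two linear-in-$r$ quantities — the entropy of a lamp configuration supported on a window of length $r$, which is of order $r\log|L|$, against the cost of having the lamplighter explore that window — weighed against the \emph{diffusive} time $t\asymp r^2$ needed to do so. The saving grace is the factor $(1-\alpha)$ that multiplies the configuration entropy.

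Concretely, I would build a block gadget of length $t$ in which the base walk is confined to a window $\{0,1,\dots,r\}$, visits every site, and terminates at the right endpoint $r$. Because $\mu$ is symmetric the base increment has zero mean, so by the spectral gap of the confined walk (of order $r^{-2}$) the probability of staying in the window for time $t\asymp r^2$ is bounded below by a constant; requiring in addition that it cover the window and end at $r$ costs only a further polynomial factor. Thus the base trajectory contributing to the gadget has probability at least $r^{-O(1)}=:P_{\mathrm{base}}$, whose $-\log$ is only $O(\log r)$ — sublinear in $r$. This is precisely where symmetry enters: against a drift, confinement for time $r^2$ would be exponentially costly in $r$.

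Conditioned on such a base path, I want the lamp configuration on the window to be close to uniform over $|L|^{\,r}$ values. For the switch--walk--switch walk this is immediate, since each visited site is toggled by independent fair coins, so the configuration is exactly uniform on the visited set; a window of length $r$ then yields at least $|L|^{r}$ configurations, each of conditional probability $|L|^{-(r+1)}$, so that summing $\mu^{(t)}(\cdot)^\alpha$ over them gives at least $P_{\mathrm{base}}^{\alpha}\,|L|^{(r+1)(1-\alpha)}$. Concatenating $k=\lfloor n/t\rfloor$ such blocks on disjoint windows produces $\ge|L|^{rk}$ distinct elements, the encoding $w\mapsto g_w$ is injective, and the R\'enyi sum is multiplicative across the independent blocks, so $\frac1n\log\sum_g\mu^{(n)}(g)^\alpha\ \ge\ \frac1t\big[(1-\alpha)(r+1)\log|L|-\alpha\,O(\log r)\big]$. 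With $t\asymp r^2$ the right-hand side is $\asymp\frac{(1-\alpha)\log|L|}{r}$ up to a lower-order $O(r^{-2}\log r)$ term, so fixing $r=r(\alpha)$ large enough that the first term dominates yields $h_\alpha(\mu)\gtrsim\frac{\log|L|}{r(\alpha)}>0$. (The required $r(\alpha)$ necessarily grows as $\alpha\uparrow 1$, consistent with $h_1=0$.)

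The main obstacle is extending the lamp step from the switch--walk--switch case to an arbitrary non-degenerate symmetric $\mu$, where the lamp increments are neither independent nor uniform. What I really need is that, conditioned on the confined base path, the law of the window configuration has R\'enyi entropy $\gtrsim c_0 r$ for some $c_0>0$ independent of $\alpha$ — a positive conditional R\'enyi-entropy rate. I would obtain this from the $\asymp r$ visits the confined walk makes to each site: by non-degeneracy the chain induced on a single lamp while the lamplighter performs a long local excursion is irreducible on $L$, hence equilibrates toward uniform with error decaying in the excursion length. Arranging each block as a slow left-to-right sweep that performs an equilibrating local excursion at every site should make each lamp close to uniform and approximately independent across sites, giving the needed entropy lower bound. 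Quantifying this equilibration and the near-independence across sites, uniformly in $r$, is the technical heart of the argument; the rest is the bookkeeping sketched above.
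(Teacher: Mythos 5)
Your route is genuinely different from the paper's. The paper never constructs explicit trajectories: it tilts $\mu$ in the $\Z$-coordinate to a measure $\mu_t$ with drift of order $t$, proves $h_1(\mu_t)\geq ct$ (Proposition~\ref{prop:ll}, via the linear lower bound on the range in Lemma~\ref{lemma:visited}), and transfers this back to $h_\alpha(\mu)$ through a Cauchy--Schwarz and truncation argument in which the cost of tilting enters only at order $t^2$ and is beaten by the linear gain $ct$. Your block construction is a hard, combinatorial incarnation of the same competition --- linear lamp entropy on a window of length $r$ against the merely logarithmic cost of confining a zero-drift walk to that window for the diffusive time $r^2$ --- and for the switch--walk--switch measure your outline is essentially complete: the superadditivity bound $\mu^{(kt)}(g_1\cdots g_k)\geq\prod_i\mu^{(t)}(g_i)$, the injectivity of concatenation over disjoint windows (up to the lamp at shared endpoints, which costs one site per block), the confinement-and-covering estimate $P_{\mathrm{base}}\geq r^{-O(1)}$ for a symmetric finitely supported step of positive variance, and the resulting inequality $h_\alpha(\mu)\geq r^{-2}\bigl[\bigl(r-\tfrac{1+\alpha}{1-\alpha}\bigr)\log|L|-\tfrac{\alpha}{1-\alpha}\,O(\log r)\bigr]$ all check out, and choosing $r=r(\alpha)$ large gives positivity. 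Your approach yields an explicit quantitative lower bound and avoids tilted measures altogether; the paper's approach avoids all path surgery, needing only a law of large numbers for the tilted walk.

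The one genuine gap is the step you flag yourself: for a general non-degenerate symmetric $\mu$ you need the conditional entropy of the window configuration, given a confined covering base trajectory, to be at least $c_0 r$ with $c_0>0$ independent of $r$. Two remarks. First, you are aiming for more than you need: near-uniformity and near-independence of the lamps are not required, because $H_\alpha\geq H_1$ for $\alpha\in(0,1)$, so it suffices to lower-bound the conditional \emph{Shannon} entropy of the configuration given the confinement event, and since further conditioning only decreases Shannon entropy it is enough to bound it conditionally on all auxiliary randomness that determines the base path. Second, your proposed mechanism (equilibration of ``the chain induced on a single lamp'' along local excursions) is not well posed for general $\mu$: a single step may flip several lamps in a correlated way while simultaneously moving the walker, so there is no autonomous single-lamp chain to equilibrate. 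The correct substitute is exactly the decomposition used in the paper's proof of Proposition~\ref{prop:ll}: by non-degeneracy (after replacing $\mu$ by a lazy version of $\mu^{(k)}$) one writes $\mu^{(k)}=\eps|L|\,\nu+(1-\eps|L|)\bar\mu$ with $\nu$ uniform on the pure lamp flips $(f^\ell,0)$, $\ell\in L$; conditioning on the Bernoulli coins and the $\bar\mu$-increments determines the base path and leaves i.i.d.\ uniform lamp values at the sites visited at ``heads'' times, so on your covering event the conditional entropy is at least $\eps' r\log|L|$. With that lemma substituted for the equilibration step, your argument closes and proves the theorem in full generality.
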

The proof of Theorem~\ref{thm:main-ll-positivity} involves ``tilting'' $\mu$ along the second coordinate, resulting in a random walk that has positive drift, and then relating its R\'enyi entropies to those of the original, untilted walk.\footnote{For $\mu$-random walks with positive drift in the second coordinate (the location of the lamplighter) it is known that $h_1(\mu)>0$ and hence, by the monotonicity of R\'enyi entropies, the claim also follows. For asymmetric $\mu$ with zero drift the claim is likewise true, using a similar proof to that of our Theorem~\ref{thm:main-ll-positivity}.}

\bigskip 

\subsection*{Continuity and analyticity of asymptotic R\'enyi entropies}
Theorem~\ref{thm:main-cont} states that $h_\alpha$ is continuous everywhere except perhaps at $\alpha=1$. For symmetric random walks on amenable groups, since $h_\alpha=0$ for all $\alpha>1$,  $h_\alpha$ is discontinuous at $\alpha=1$ if and only if the Avez entropy $h_1$ is positive. It is natural to next ask for which random walks on non-amenable groups is $h_\alpha$ continuous at $\alpha=1$. In particular, one may conjecture that this holds for any random walk on a hyperbolic group (see, e.g., \cite{boulanger2021large} for results in this spirit). 

As an example supporting this conjecture, we show that for the simple random walk on the free group, $h_\alpha$ is continuous at $\alpha=1$. More generally, we show that $h_\alpha$ is (mostly) analytic.
\begin{maintheorem}
\label{thm:main-free}
Let $\mu$ be the simple random walk on a free group with at least two generators. Then $h_\alpha(\mu)$ is analytic on $(0,\infty)\setminus \{2\}$, but not at $\alpha=2$, where it is not twice-differentiable.
\end{maintheorem}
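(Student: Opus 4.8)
The plan is to compute $h_\alpha(\mu)$ explicitly via the known asymptotics of the return probabilities and transition probabilities for the simple random walk on the free group $F_k$ with $k \geq 2$ generators. The key analytic input is the local limit theorem: for the simple random walk on $F_k$, the $n$-step transition probability to a group element $g$ at distance $d = |g|$ behaves asymptotically like a product of an exponential decay factor, a polynomial correction, and a factor depending on the ratio $d/n$. Concretely, one has estimates of the form $\mu^{(n)}(g) \approx C \rho^n n^{-3/2} \phi(d/n)$ in the bulk, where $\rho$ is the spectral radius, together with matching large-deviation estimates governing how the distance $|Z_n|$ concentrates. The sum $\sum_g \mu^{(n)}(g)^\alpha$ is then controlled by organizing group elements by their distance from the identity: there are roughly $(2k-1)^d$ elements at distance $d$, so
\begin{align*}
    \sum_{\omega}\mu^{(n)}(\omega)^\alpha \approx \sum_{d} (2k-1)^d \big(\mu^{(n)}_d\big)^\alpha,
\end{align*}
where $\mu^{(n)}_d$ denotes the common (up to polynomial factors) transition probability at distance $d$. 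Taking $\frac{1}{n}\log$ and letting $n \to \infty$, the polynomial factors wash out and the sum is dominated by its largest term, so $(1-\alpha)h_\alpha(\mu)$ emerges as the supremum over a speed parameter $s = d/n \in [0,1]$ of an explicit concave function built from the growth rate $\log(2k-1)$ and the large-deviation rate function of the distance.

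The second step is to carry out this variational problem explicitly. I would write $(1-\alpha)h_\alpha(\mu) = \sup_{s \in [0,1]} \big[ s\log(2k-1) - \alpha I(s) \big]$ (with appropriate sign bookkeeping for $\alpha$ on either side of $1$), where $I$ is the rate function for $\frac{1}{n}|Z_n|$, itself an explicit real-analytic strictly convex function obtained from the Legendre transform of a log-generating function. The maximizing $s = s^*(\alpha)$ is determined by the first-order condition $\log(2k-1) = \alpha I'(s^*)$, and real-analyticity of $h_\alpha$ away from singular points would then follow from the analytic implicit function theorem, since $I$ is real-analytic and strictly convex, so $I'$ is an analytic diffeomorphism on the interior of its domain. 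The point $\alpha = 2$ is special because it is exactly where the optimizer $s^*(\alpha)$ hits the boundary of the feasible interval: for $\alpha \geq 2$ the supremum is attained at an endpoint (corresponding to the return-to-identity regime $s = 0$), giving the linear formula $h_\alpha = \frac{\alpha}{\alpha-1}h_\infty$ from Theorem~\ref{thm:main-cont}(3), while for $\alpha < 2$ the optimizer is interior. The failure of twice-differentiability at $\alpha = 2$ is the signature of this boundary-interior transition, where the two analytic branches meet with matching first derivatives but mismatched second derivatives.

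The main obstacle I expect is making the local limit theorem sufficiently uniform to justify passing from the heuristic sum over distances to a rigorous variational formula. Specifically, I need transition-probability estimates $\mu^{(n)}(g)$ that hold uniformly as $|g|/n$ ranges over all of $[0,1]$, including the large-deviation regime where $|g|$ is an atypically large or small fraction of $n$; a pointwise local limit theorem valid only for typical $|g| \sim \bar{s} n$ is not enough, since for $\alpha \neq 1$ the dominant contribution to $\sum_g \mu^{(n)}(g)^\alpha$ comes from atypical distances. I would obtain these via the generating-function / Green's function machinery for free groups, where the relevant generating functions are algebraic and their singularity structure (a square-root branch point producing the $n^{-3/2}$ correction) is completely explicit; a saddle-point or transfer-operator analysis then yields the required uniform exponential rates. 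Once the uniform rate $\Lambda(s) = \lim_n \frac{1}{n}\log \mu^{(\lfloor n \rfloor)}(g_{\lfloor sn\rfloor})$ is pinned down as an explicit real-analytic function of $s$ on $[0,1]$, the remaining analysis is a routine Laplace-method and Legendre-transform computation, and the analyticity and non-smoothness claims reduce to the elementary calculus of the explicit formula.
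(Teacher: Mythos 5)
Your overall strategy coincides with the paper's: both organize $\sum_g \mu^{(n)}(g)^\alpha$ by word length, using that $\mu^{(n)}(g)$ depends only on $|g|$ and that the sphere of radius $k$ has $2d\,(2d-1)^{k-1}$ elements, and both reduce $h_\alpha$ to a one-parameter variational problem over the speed $s=|g|/n$, with $\alpha=2$ identified as the value where the optimizer hits the boundary of the feasible interval and the formula switches to the $\frac{\alpha}{\alpha-1}h_\infty$ branch of Theorem~\ref{thm:main-cont}(3). Where you diverge is precisely at what you call the main obstacle: obtaining the exponential rate of $\Pr{|Z_n|=\lfloor sn\rfloor}$ uniformly over the full large-deviation range $s\in[0,1]$. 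You propose local-limit-theorem and Green's-function/singularity-analysis machinery; the paper instead observes that $D_n=|Z_n|$ is itself a nearest-neighbour Markov chain on the nonnegative integers (a reflected walk with up-probability $1-\frac{1}{2d}$), couples it with the unreflected biased walk $E_n$, and proves the two-sided polynomial-factor bound $\frac{1}{n}\Pr{E_n=k}\leq\Pr{D_n=k}\leq 2\Pr{E_n=k}$ (Lemma~\ref{EnDn}) via Bertrand's ballot theorem and a short induction; the rate function is then just the binomial/Stirling computation, and no local limit theorem is needed. Your route would presumably also work, but it is substantially heavier, and the uniformity you worry about is exactly what the elementary coupling delivers for free. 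Two smaller points to attend to: your variational formula carries the prefactor $\frac{1}{1-\alpha}$, so analyticity at $\alpha=1$ does not follow from the analytic implicit function theorem alone --- you must check that the optimized value has a simple zero at $\alpha=1$ so that the singularity is removable (the paper does this explicitly); and the matching of first derivatives at $\alpha=2$ should be verified from the explicit formulas on the two branches rather than asserted as generic for boundary-interior transitions.
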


As another example we study the analyticity of the R\'enyi entropies of the ``switch-walk-switch'' (SWS) random walk on the lamplighter group $\mathbb{Z}_2 \wr \Z$. Since the Avez entropy vanishes for this walk, we know that $h_\alpha(\mu)=0$ for all $\alpha \geq 1$.
\begin{maintheorem}
\label{thm:main-ll}
Let $\mu$ be the SWS random walk on $\mathbb{Z}_2 \wr \Z$. Then $h_\alpha(\mu)$ is analytic on $(0,\infty)\setminus \{1\}$, but not at $\alpha=1$.
\end{maintheorem}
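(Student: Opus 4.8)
The plan is to reduce the computation of $h_\alpha(\mu)$ for $\alpha\in(0,1)$ to a one-dimensional variational problem governed by the large deviations of the range of the simple random walk on $\Z$, solve it explicitly, and then read off analyticity; the cases $\alpha\ge 1$ and the failure at $\alpha=1$ are handled separately using results already established. First I would record the combinatorial structure of the SWS walk. Writing an element of $\mathbb{Z}_2\wr\Z$ as a pair $(f,k)$ with $f$ a finitely supported lamp configuration and $k$ the lamplighter position, I would show that, conditioned on the trajectory $S_0=0,S_1,\dots,S_n$ of the position coordinate (a simple random walk), the lamp configuration is uniform on the $2^{N}$ subsets of the range $[m_n,M_n]$, where $N=M_n-m_n+1$; this is because in a switch--walk--switch step every visited site is toggled by an independent fair coin and distinct sites use disjoint coins. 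Consequently
\begin{equation*}
\mu^{(n)}(f,k)=\sum_{a\le \ell,\ b\ge r}\Pr{m_n=a,\,M_n=b,\,S_n=k}\,2^{-(b-a+1)},
\end{equation*}
where $[\ell,r]$ is the convex hull of $\{0,k\}\cup\supp f$, so $\mu^{(n)}(f,k)$ depends on $f$ only through $\ell,r$. Summing over $f$ (there are $\asymp 2^{\,r-\ell}$ configurations with a given hull) and over $k$ gives, at exponential scale,
\begin{equation*}
\sum_{g}\mu^{(n)}(g)^\alpha\ \asymp\ \sum_{\ell\le 0\le r,\ \ell\le k\le r}2^{\,r-\ell}\Big(\Pr{m_n=\ell,\,M_n=r,\,S_n=k}\,2^{-(r-\ell)}\Big)^{\alpha}.
\end{equation*}

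Next I would pass to the limit by Laplace's method, using the large deviation principle for $(m_n/n,M_n/n,S_n/n)$. The key simplification is that, to realize a range of size $\rho n$, the cheapest strategy is one-sided: the event $\{\text{range}\supseteq[-xn,yn]\}$ is contained in $\{M_n-m_n\ge(x+y)n\}$, whose rate is $J(x+y)$ with $J(\rho)=\frac{1+\rho}{2}\log(1+\rho)+\frac{1-\rho}{2}\log(1-\rho)$, and this bound is attained by a ballistic excursion in a single direction (with $\ell=0$, $r=k=\rho n$), so the endpoint and the two-sided degrees of freedom drop out. This yields, for $\alpha\in(0,1)$,
\begin{equation*}
(1-\alpha)\,h_\alpha(\mu)=\sup_{\rho\in[0,1]}\big[(1-\alpha)\,\rho\log 2-\alpha\,J(\rho)\big],
\end{equation*}
equivalently $h_\alpha(\mu)=\sup_{\rho\in[0,1]}\big[\rho\log 2-\tfrac{\alpha}{1-\alpha}J(\rho)\big]$.

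Setting the $\rho$-derivative to zero gives the unique interior maximizer $\rho^{*}(\alpha)=\tanh\!\big(\tfrac{(1-\alpha)\log 2}{\alpha}\big)\in(0,1)$, which depends analytically on $\alpha$; substituting it back expresses $h_\alpha(\mu)$ as an analytic function of $\alpha$ on $(0,1)$. (Alternatively one invokes the analytic implicit function theorem, the point being that the optimizer is interior and non-degenerate throughout $(0,1)$, so no phase transition occurs inside the interval.) For $\alpha>1$ we have $h_\alpha(\mu)=0$, which is trivially analytic. For the failure at $\alpha=1$, Theorem~\ref{thm:main-ll-positivity} gives $h_\alpha(\mu)>0$ for all $\alpha\in(0,1)$, while $h_\alpha(\mu)=0$ for all $\alpha\ge 1$; since $\alpha\mapsto h_\alpha$ is continuous at $1$ (Theorem~\ref{thm:main-cont}), were it analytic at $1$ it would vanish on a two-sided neighborhood by the identity theorem, contradicting positivity on the left. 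In fact the explicit formula shows more: as $\alpha\to 1^-$ one has $\rho^{*}(\alpha)\to 0$ and $h_\alpha(\mu)\sim\tfrac{(\log 2)^2}{2}(1-\alpha)$, so the left derivative at $1$ is $-\tfrac{(\log 2)^2}{2}\neq 0$ while the right derivative is $0$, and the graph has a corner there.

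I expect the main obstacle to be making the variational formula rigorous: establishing the exact lamp-uniformity statement for the SWS walk, controlling the sum over enclosing ranges so that $\mu^{(n)}(f,k)$ is governed at exponential scale by its minimal-range term, and carrying out the Laplace asymptotics with matching upper and lower bounds for the joint range--endpoint large deviations (in particular the inequality $I(x,y,z)\ge J(x+y)$ with equality in the one-sided case). Once the identity $(1-\alpha)h_\alpha=\sup_\rho[(1-\alpha)\rho\log2-\alpha J(\rho)]$ is in hand, analyticity on $(0,1)$ is a short explicit computation and the behavior at $\alpha=1$ follows from the results already cited.
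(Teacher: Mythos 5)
Your proposal is correct and follows essentially the same route as the paper: reduce $\sum_g\mu^{(n)}(g)^\alpha$ to the range $R-L$ of the lamplighter's position via the conditional uniformity of the lamps, invoke the Hamana--Kesten large deviation rate for the range, and solve the resulting one-dimensional variational problem, whose maximizer $\tanh\bigl(\tfrac{(1-\alpha)\log 2}{\alpha}\bigr)$ agrees with the paper's $\tfrac{4^{1/\alpha}-4}{4^{1/\alpha}+4}$. The only cosmetic differences are that the paper collapses the endpoint and two-sided degrees of freedom by crude polynomial counting rather than a joint LDP, and detects non-analyticity at $\alpha=1$ from the corner in the explicit formula rather than via the identity theorem; both are sound.
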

We prove Theorems~\ref{thm:main-free} and~\ref{thm:main-ll} by explicitly calculating the R\'enyi entropies and showing that they are elementary functions. 


We have so far considered only finitely supported $\mu$. We end this section with a note about non-finitely supported $\mu$. For such $\mu$, $H_0(\mu) = \infty$, and thus $h_0(\mu) = \infty$. Nevertheless, it is still possible that $h_\alpha(\mu) < \infty$ for some $\alpha >0$. For example, infinitely supported random walks with finite Shannon entropy are important in the study of groups of subexponential growth (see, e.g., \cite{frisch2019choquet, erschler2020growth}). For $\alpha > 1$, $h_\alpha(\mu)$ is finite for any $\mu$, and so asymptotic R\'enyi entropies might provide a tool to study random walks with heavy tails. We leave this question for future study.

\section{Preliminaries}
\subsection{R\'enyi entropy}
Let $\nu$ be a finitely supported probability distribution on a countable set $\Omega$. For $\alpha \in (0,1) \cup (1,\infty)$, the {\em $\alpha$-R\'enyi entropy} of $\nu$ is given by
\begin{align*}
    H_\alpha(\nu) = \frac{1}{1-\alpha}\log\sum_{\omega \in \Omega}\nu(\omega)^\alpha.
\end{align*}
For $\alpha \in \{0,1,\infty\}$ it is given by
\begin{align*}
    H_0(\nu) &= \log|\{\omega \,:\, \nu(\omega)>0\}|\\
    H_1(\nu) &= \sum_{\omega \in \Omega}\nu(\omega)\log\frac{1}{\nu(\omega)}\\
    H_\infty(\nu) &= \min_{\omega \in \Omega} \log\frac{1}{\nu(\omega)}.
\end{align*}
Hence $H_0$ is the logarithm of the size of the support, $H_1$ is the Shannon entropy, and $H_\infty$  is minus the log of the mass of the largest atom. Under this definition, it is well known (see, e.g., \cite[pp.\ 50--51]{principe2010information}) or immediate that
\begin{enumerate}[(i)]
    \item The map $\alpha \mapsto H_\alpha(\nu)$ is continuous and (weakly) decreasing. If $\nu$ is not the uniform distribution on a subset of $\Omega$ then it is strictly decreasing.
    \item For every $\alpha \in [0,\infty]$ it holds that $H_\alpha(\nu_1 \times \nu_2) = H_\alpha(\nu_1) + H_\alpha(\nu_2)$.
    \item For every map $f \colon \Omega \to \Omega'$ and every $\alpha \in [0,\infty]$ it holds that $H_\alpha(f_*\nu) \leq H_\alpha(\nu)$.
\end{enumerate}

Define the \emph{log-likelihood} random variable $L \colon \Omega \to \R$ by $L(\omega) = -\log \nu(\omega)$. Let $K_\nu \colon \R \to \R$ be the  cumulant generating function of $L$. That is, let
\begin{align*}
    K_\nu(t) = \log\E{\ee^{tL}} = \log\sum_{\omega}\ee^{tL(\omega)}\nu(\omega).
\end{align*}
Then for $\alpha \in (0,\infty)$,
\begin{align}
  \label{eq:K-H}
  K_\nu(1-\alpha) = (1-\alpha)H_\alpha(\nu).
\end{align} 
Note that this implies that $(1-\alpha)H_\alpha(\nu)$ is convex. As we show next, $H_\alpha(\nu)$ admits another re-parameterization which makes it convex for $\alpha \in (0,1)$. A similar version for $\alpha\in (1,\infty)$ has been observed in \cite{BuryakMishura2021convex}.
\begin{proposition}
\label{prop:convex}
For any finitely supported measure $\nu$,
$\beta \mapsto H_{1+\frac{1}{\beta}}(\nu)$ is convex for $\beta \in (-\infty,-1)$.
\end{proposition}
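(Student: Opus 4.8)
The plan is to reduce everything to the convexity of the cumulant generating function $K_\nu$, which is automatic since $K_\nu$ is the logarithm of a moment generating function. First I would rewrite the target function using the change of variables already built into \eqref{eq:K-H}. Setting $\alpha = 1 + \tfrac{1}{\beta}$, so that $1-\alpha = -\tfrac1\beta$, and writing $t := -\tfrac1\beta$, the identity $K_\nu(1-\alpha) = (1-\alpha)H_\alpha(\nu)$ reads $K_\nu(t) = -\tfrac1\beta H_{1+1/\beta}(\nu)$, hence
\[
  g(\beta) := H_{1+1/\beta}(\nu) = -\beta\,K_\nu(-1/\beta).
\]
For $\beta \in (-\infty,-1)$ we have $t = -\tfrac1\beta \in (0,1)$ and $\alpha = 1+\tfrac1\beta \in (0,1)$, which is exactly the regime where $H_\alpha$ is given by the smooth formula; since $L$ is bounded on the (finite) support of $\nu$, the function $M(t) = \E{\ee^{tL}}$ is entire and strictly positive, so $K_\nu = \log M$ and therefore $g$ are smooth here.

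Next I would differentiate $g$ twice in $\beta$ by the chain rule, using $t = -\tfrac1\beta$ and $\tfrac{dt}{d\beta} = \tfrac{1}{\beta^2} = t^2$. A short computation gives
\[
  g'(\beta) = -K_\nu(t) + t\,K_\nu'(t),
\]
and differentiating once more the two first-order terms cancel, leaving the clean identity
\[
  g''(\beta) = t^3\,K_\nu''(t), \qquad t = -1/\beta.
\]
Because $\beta < -1 < 0$ forces $t>0$, we have $t^3 > 0$; and $K_\nu'' \ge 0$ everywhere, being the variance of $L$ under the tilted measure proportional to $\ee^{tL(\omega)}\nu(\omega)$. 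Hence $g''(\beta) \ge 0$, which is precisely the asserted convexity.

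The conceptual point, and the step I would watch most carefully, is that convexity is \emph{not} preserved under the nonlinear reparametrization $\beta \mapsto t = -1/\beta$, so one cannot simply invoke convexity of $K_\nu$. What makes the argument succeed is the exact cancellation of the $K_\nu'$ terms in $g''$, which leaves a strictly positive multiple of $K_\nu''$; this is the content of the novel convexity and explains why the specific parametrization $1+\tfrac1\beta$ is the right one rather than, say, treating $t \mapsto K_\nu(t)/t$ directly in the variable $t$. Everything else is routine: the only auxiliary fact used is the smoothness of $K_\nu$ on the relevant interval, which follows from finite support, and the standard nonnegativity of $K_\nu''$.
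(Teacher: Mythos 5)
Your proof is correct, but it takes a genuinely different route from the paper. The paper proves the proposition via H\"older's inequality applied to $\ell^p$ ``norms'' with negative exponents: it shows that $p \mapsto \log\norm{f}_{1/p}$ is concave for $p<0$ when $f=\nu$, which is an interpolation-type argument requiring no differentiability. You instead differentiate the reparametrized cumulant generating function directly, and the computation checks out: with $t=-1/\beta$ one indeed gets $g'(\beta)=-K_\nu(t)+tK_\nu'(t)$ and then the clean cancellation $g''(\beta)=t^3K_\nu''(t)$, with $t\in(0,1)$ for $\beta\in(-\infty,-1)$ and $K_\nu''\ge 0$ as a tilted variance; smoothness is immediate from finite support. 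Your approach buys two things the paper's does not make explicit: the identity $g''=t^3K_\nu''$ shows that the convexity is \emph{strict} unless $L$ is a.s.\ constant (i.e.\ unless $\nu$ is uniform on its support), which is relevant to the strict-monotonicity argument in the proof of Theorem~\ref{thm:main-cont}; and the sign of $t^3$ immediately explains why the same parametrization gives \emph{concavity} for $\beta>0$ (i.e.\ $\alpha>1$), the companion fact the paper attributes to \cite{BuryakMishura2021convex}. The paper's H\"older argument, in exchange, is derivative-free and would survive in settings where one only has pointwise limits of such functions rather than smoothness.
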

\begin{proof}
Let $\nu$ be a finitely supported probability measure on a set $\Omega$. For $f\colon \Omega \to \R$ and $p<0$, we denote $\ell^p$ norms by $\left\Vert f\right\Vert
_{p}^{p}:=\sum_\omega \left\vert f(\omega)\right\vert ^{p}\nu(\omega) $, where the sum is taken over all $\omega$ such that $f(\omega) \neq 0$. 

Fixing $p_{1},p_{2} < 0$ and $\theta \in (0,1)$, we let $p<0$ be such that
\[
\frac{1}{p}=\frac{\theta }{p_{1}}+\frac{1-\theta }{p_{2}},
\]
so that 
$$\frac{\theta p}{p_{1}}+\frac{(1-\theta )p}{p_{2}}=1.$$ 
Then, 
\begin{align*}
\left\Vert f\right\Vert _{p}^{p} =\sum_\omega \left\vert f(\omega)\right\vert ^{\theta
p}\left\vert f(\omega)\right\vert ^{(1-\theta )p}\nu(\omega)  
\leq \left\Vert f^{\theta p}\right\Vert _{\frac{p_{1}}{\theta p}%
}\left\Vert f^{(1-\theta )p}\right\Vert _{\frac{p_{2}}{(1-\theta )p}} 
=\left\Vert f\right\Vert _{p_{1}}^{\theta p}\left\Vert f\right\Vert
_{p_{2}}^{(1-\theta )p},
\end{align*}%
where the inequality follows from an application of the standard H\"older's inequality.

It then follows that  
\[
p\log \left\Vert f\right\Vert _{p}\leq \theta p\log \left\Vert f\right\Vert
_{p_{1}}+(1-\theta )p\log \left\Vert f\right\Vert _{p_{2}},
\]%
and therefore
\[
\log \left\Vert f\right\Vert _{p}\geq \theta \log \left\Vert f\right\Vert
_{p_{1}}+(1-\theta )\log \left\Vert f\right\Vert _{p_{2}}.
\]%
Since $\frac{1}{p}=\frac{\theta }{p_{1}}+\frac{1-\theta }{p_{2}}$, this shows that $p \mapsto \log \left\Vert f\right\Vert _{\frac{1}{p}}$ is
concave for $p<0$. 

Now let $f \colon \Omega \to \R$ be given by $f(\omega) = \nu(\omega)$. Then
\[
\log \left\Vert f\right\Vert _{\frac{1}{p}}=p\log \sum_{\omega}\nu(\omega)^{1+\frac{1}{p}}.
\]%
Thus, 
\[
H_{1+\frac{1}{\beta }}(\nu)=-\beta \log \sum_{\omega}\nu(\omega)^{1+\frac{%
1}{\beta }}=-\log \left\Vert f\right\Vert _{\frac{1}{\beta }}
\]%
is a convex function for $\beta \in (-\infty ,-1)$.
\end{proof}

We end this section with another simple observation.
\begin{lemma}
\label{clm:alpha_infty_bound}
For $\alpha>1$ it holds that 
\begin{align*}
    H_\alpha(\nu) 
    \leq \frac{\alpha}{\alpha-1}H_\infty(\nu).
\end{align*}
\end{lemma}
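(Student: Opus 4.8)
The plan is to prove this directly from the definitions, exploiting the single most massive atom of $\nu$ together with the sign of the prefactor $\frac{1}{1-\alpha}$. Write $m = \max_{\omega} \nu(\omega)$, so that by definition $H_\infty(\nu) = \min_\omega \log\frac{1}{\nu(\omega)} = -\log m$. The entire proof rests on the trivial observation that the sum $\sum_\omega \nu(\omega)^\alpha$, being a sum of nonnegative terms, is bounded below by any single one of them; choosing the maximal atom gives
\[
\sum_{\omega \in \Omega} \nu(\omega)^\alpha \;\geq\; m^\alpha.
\]

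From here I would simply take logarithms and rescale. Taking $\log$ of the displayed inequality yields $\log\sum_\omega \nu(\omega)^\alpha \geq \alpha \log m$. The one point requiring a moment's care is that for $\alpha > 1$ the factor $\frac{1}{1-\alpha}$ is \emph{negative}, so multiplying through by it reverses the inequality:
\[
H_\alpha(\nu) = \frac{1}{1-\alpha}\log\sum_{\omega}\nu(\omega)^\alpha
\;\leq\; \frac{\alpha}{1-\alpha}\log m
\;=\; \frac{\alpha}{\alpha-1}\bigl(-\log m\bigr)
\;=\; \frac{\alpha}{\alpha-1}H_\infty(\nu).
\]
This is exactly the claimed bound.

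There is essentially no obstacle here: the argument is a one-line estimate, and the only thing to watch is the sign bookkeeping around $1-\alpha < 0$, which converts the lower bound on the sum into the desired upper bound on the entropy. (One could alternatively phrase this through the cumulant generating function identity $K_\nu(1-\alpha) = (1-\alpha)H_\alpha(\nu)$ and the fact that $L(\omega) = -\log\nu(\omega) \geq H_\infty(\nu)$ pointwise, but the direct computation above is the shortest route and requires no additional machinery.)
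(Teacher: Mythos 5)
Your proof is correct and is essentially the paper's own argument: both bound $\sum_\omega \nu(\omega)^\alpha$ from below by its largest term $\max_\omega \nu(\omega)^\alpha$ and then use the negativity of $\tfrac{1}{1-\alpha}$ to flip the inequality. No differences worth noting.
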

\begin{proof}
Since $\alpha>1$, we note that the factor $1-\alpha$ is negative, and hence
\begin{align*}
    H_\alpha(\nu) 
    = \frac{1}{1-\alpha}\log\sum_\omega \nu(\omega)^\alpha
    \leq \frac{1}{1-\alpha}\log\max_\omega \nu(\omega)^\alpha,
\end{align*}
which, by the definition of $H_\infty$, is equal to $\frac{\alpha}{\alpha-1}H_\infty(\nu)$.

\end{proof}
\subsection{Random walks on groups}
Let $G$ be a finitely generated discrete group, and let $\mu$ be a finitely supported probability measure on $G$. Denote convolution by $*$, and the $n$-fold convolution of $\mu$ with itself by $\mu^{(n)}$. We say that $\mu$ is symmetric if $\mu(g)=\mu(g^{-1})$. We say that $\mu$ is non-degenerate if the support of $\mu$ generates $G$ as a semi-group; equivalently, the $\mu$-random walk on $G$ is an irreducible Markov chain.

For $\alpha \in [0,\infty]$, define
\begin{align*}
  h_\alpha(\mu) = \lim_n\frac{1}{n}H_\alpha(\mu^{(n)}).
\end{align*}
We refer to the family $(h_\alpha)_\alpha$ as invariants, since if $\pi \colon G \to H$ is a group isomorphism that maps the probability measure $\mu$ on $G$ to the probability measure $\nu$ on $H$, then $h_\alpha(\mu) = h_\alpha(\nu)$.

It follows from property (ii) of $H_\alpha$ that $H_\alpha(\mu^{(n)} \times \mu^{(m)}) = H_\alpha(\mu^{(n)}) + H_\alpha(\mu^{(m)})$. And since $\mu^{(n+m)} = \mu^{(n)} * \mu^{(m)}$, it follows from property (iii) of $H_\alpha$ that $H_\alpha(\mu^{(n+m)}) \leq H_\alpha(\mu^{(n)}) + H_\alpha(\mu^{(m)})$. Thus
the map $n \mapsto H_\alpha(\mu^{(n)})$ is subadditive, and so the limit above exists and is finite. Moreover, this limit is  equal to the infimum of $\frac{1}{n}H_\alpha(\mu^{(n)})$. Thus, for a given $\mu$, $h_\alpha(\mu)$ is upper semi-continuous.

By definition, 
\begin{align*}
    h_1(\mu) = \lim_{n}\frac{1}{n}\sum_g\mu^{(n)}(g)\log\frac{1}{\mu^{(n)}(g)}
\end{align*}
is the random walk entropy, or Avez entropy. Similarly, $h_0(\mu)$ is the exponential growth rate of the group generated by the support of $\mu$:
\begin{align*}
    h_0(\mu) = \lim_n \frac{1}{n} H_0(\mu^{(n)}) = \lim_n \frac{1}{n}\log |\{g \,:\, \mu^{(n)}(g) > 0\}| = \lim_n \frac{1}{n}\log|B_\mu(n)|.
\end{align*}
Here $B_\mu(n)$ is the ball of radius $n$ with respect to the word metric defined by the generating set given by the support of $\mu$. 

The proof of the next claim is standard; see, e.g., \cite[Exercise 4.4.5]{lalley2023random}, where it is shown that $H_\infty(\mu^{(2n)})=-\log\mu^{(2n)}(e)$.
\begin{claim}
  \label{clm:spectral}
  Let $\mu$ be a finitely supported symmetric measure. Then $h_\infty(\mu)$ is the logarithm of the inverse of the spectral radius:
  \begin{align*}
      h_\infty(\mu) = \lim_{n \to \infty}\frac{1}{2n}\log\frac{1}{\mu^{(2n)}(e)}.
  \end{align*}
\end{claim}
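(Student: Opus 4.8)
The plan is to reduce the statement to a single combinatorial fact: for symmetric $\mu$, the convolution power $\mu^{(2n)}$ attains its maximum value at the identity $e$. Once this is in hand, the claim follows immediately from the definition $H_\infty(\nu) = -\log\max_\omega\nu(\omega)$, together with the fact (established earlier via subadditivity) that the limit defining $h_\infty$ exists.

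First I would record that symmetry of $\mu$ is inherited by every convolution power: reversing the order of a product $x_1\cdots x_n$ and inverting each factor, and using $\mu(x^{-1})=\mu(x)$, shows that $\mu^{(n)}(g^{-1})=\mu^{(n)}(g)$ for all $g$ and all $n$. This symmetry of $\mu^{(n)}$ is what powers the next step.

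The heart of the argument is the inequality $\mu^{(2n)}(g)\le \mu^{(2n)}(e)$ for every $g$. Writing $\mu^{(2n)}(g)=\sum_h \mu^{(n)}(h)\mu^{(n)}(h^{-1}g)$ and applying Cauchy--Schwarz gives
\[
\mu^{(2n)}(g) \le \Big(\sum_h \mu^{(n)}(h)^2\Big)^{1/2}\Big(\sum_h \mu^{(n)}(h^{-1}g)^2\Big)^{1/2}.
\]
Reindexing the second sum by $h'=h^{-1}g$ (which ranges over all of $G$ as $h$ does) shows it equals $\sum_{h'}\mu^{(n)}(h')^2$, so both factors equal $\sum_h \mu^{(n)}(h)^2$. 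By the symmetry of $\mu^{(n)}$ this common value is $\sum_h \mu^{(n)}(h)\mu^{(n)}(h^{-1})=\mu^{(2n)}(e)$, and the inequality follows. Hence $\max_g \mu^{(2n)}(g)=\mu^{(2n)}(e)$, so that $H_\infty(\mu^{(2n)})=-\log\max_g\mu^{(2n)}(g)=\log\frac{1}{\mu^{(2n)}(e)}$.

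To conclude, since the limit $\lim_n \tfrac1n H_\infty(\mu^{(n)})$ exists, it coincides with the limit along any subsequence, in particular the even-time one; substituting the exact formula for $H_\infty(\mu^{(2n)})$ yields
\[
h_\infty(\mu)=\lim_{n\to\infty}\frac{1}{2n}H_\infty(\mu^{(2n)})=\lim_{n\to\infty}\frac{1}{2n}\log\frac{1}{\mu^{(2n)}(e)}.
\]
The only genuinely non-routine step is the maximum-at-the-identity inequality, and even that is a short Cauchy--Schwarz computation once the symmetry of $\mu^{(n)}$ has been noted; the remaining passage to the limit is bookkeeping that relies on the already-proved existence of $h_\infty$.
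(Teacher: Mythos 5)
Your proof is correct, and it supplies exactly the standard argument that the paper outsources to a reference (the paper only remarks that $H_\infty(\mu^{(2n)})=-\log\mu^{(2n)}(e)$ is shown in \cite[Exercise 4.4.5]{lalley2023random}): the Cauchy--Schwarz derivation of $\mu^{(2n)}(g)\le\mu^{(2n)}(e)$ from the symmetry of $\mu^{(n)}$ is the usual route, and the passage to the limit via the already-established subadditivity is handled properly. No gaps.
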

There is an immediate corollary.

\begin{corollary}
\label{claim:amenable}
Let $G$ be a finitely generated amenable group, and let $\mu$ be a finitely supported, symmetric, non-degenerate probability measure on $G$. Then $h_\alpha(\mu) = 0$ for all $\alpha>1$.
\end{corollary}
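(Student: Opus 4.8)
The plan is to reduce the statement to the behavior of the asymptotic min-entropy $h_\infty$, and then to invoke the comparison bound between $H_\alpha$ and $H_\infty$ for $\alpha>1$. First I would establish that $h_\infty(\mu)=0$. By Claim~\ref{clm:spectral}, for symmetric $\mu$ we have $h_\infty(\mu)=\lim_n \frac{1}{2n}\log\frac{1}{\mu^{(2n)}(e)}$, which is exactly minus the logarithm of the spectral radius $\rho$ of the Markov operator. Since $G$ is amenable and $\mu$ is symmetric and non-degenerate, Kesten's theorem~\cite{kesten1959symmetric} gives $\rho=1$, and hence $h_\infty(\mu)=0$.

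The remaining step is a short interpolation argument. Applying Lemma~\ref{clm:alpha_infty_bound} to $\nu=\mu^{(n)}$, for every $\alpha>1$ we have $H_\alpha(\mu^{(n)})\leq \frac{\alpha}{\alpha-1}H_\infty(\mu^{(n)})$. Dividing by $n$ and passing to the limit yields
\[
    h_\alpha(\mu)\leq \frac{\alpha}{\alpha-1}\,h_\infty(\mu)=0.
\]
On the other hand $H_\alpha(\mu^{(n)})\geq 0$ for $\alpha>1$, because $\sum_g \mu^{(n)}(g)^\alpha\leq \sum_g \mu^{(n)}(g)=1$ makes the logarithm non-positive while the prefactor $\frac{1}{1-\alpha}$ is negative; hence $h_\alpha(\mu)\geq 0$ as well. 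Combining the two inequalities gives $h_\alpha(\mu)=0$ for all $\alpha>1$, as claimed.

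I expect essentially no obstacle here beyond correctly assembling the inputs. The only substantive fact is Kesten's characterization of amenability through the spectral radius, which is cited, together with the identification of that radius with $h_\infty$ furnished by Claim~\ref{clm:spectral}. Once $h_\infty(\mu)=0$ is in hand, the monotonicity/comparison bound of Lemma~\ref{clm:alpha_infty_bound} does all the remaining work, so the proof should be only a few lines.
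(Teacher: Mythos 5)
Your proposal is correct and follows exactly the paper's argument: Kesten's theorem plus Claim~\ref{clm:spectral} give $h_\infty(\mu)=0$, and Lemma~\ref{clm:alpha_infty_bound} transfers this to all $\alpha>1$. The only (minor) addition is that you spell out the non-negativity $h_\alpha(\mu)\geq 0$, which the paper leaves implicit.
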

\begin{proof}
By Kesten's Theorem~\cite{kesten1959symmetric,kesten1959full}, the assumptions on $\mu$ and the amenability of $G$ imply that the spectral radius of the random walk is $1$. Hence, by Claim~\ref{clm:spectral}, $h_\infty(\mu)=0$.

By Lemma~\ref{clm:alpha_infty_bound}, if $\alpha>1$ then  $H_\alpha \leq \frac{\alpha}{\alpha-1}H_\infty$ and so we have that $h_\alpha(\mu) \leq \frac{\alpha}{\alpha-1}h_\infty(\mu)=0$.
\end{proof}


\section{Proofs}

\begin{proof}[Proof of Theorem~\ref{thm:main-cont}]
Recall from \eqref{eq:K-H} that for $\alpha\in (0,\infty)$, it holds that
\begin{align*}
(1-\alpha)H_\alpha(\nu) = K_\nu(1-\alpha).     
\end{align*}
The cumulant generating function $K_\nu(t)$ of $L(\omega):=-\log \nu(\omega)$ is a convex function for $t\in \mathbb{R}$. For a fixed $t$, it is straightforward to check that the following hold:
\begin{enumerate}
    \item Additivity: $K_{\nu\times \nu'}(t)=K_{\nu}(t)+K_{\nu'}(t)$.
    \item Monotonicity under push-forwards: $K_{f_\ast\nu}(t)\leq K_\nu(t)$.
\end{enumerate}
By Fekete's lemma, the sequence of functions $t \mapsto \frac{1}{n}K_{\mu^{(n)}}(t)$ converges pointwise. The limit, which we denote by $k_\mu(t)$  is a convex function as well, and thus continuous. For $\alpha\geq 0$, we have $k_\mu(1-\alpha)=(1-\alpha) h_\alpha(\mu)$. Hence the restriction of $h_\alpha(\mu)$ to $[0,\infty) \setminus \{1\}$ is continuous. The function $\alpha\mapsto h_\alpha(\mu)=\inf_n \frac{1}{n}H_\alpha(\mu^{(n)}) $ is upper semi-continuous as an infimum of continuous functions. Since $H_\alpha(\nu)$ is decreasing for any $\nu$, so is $h_\alpha(\mu)$. Upper semi-continuity and monotonicity imply that $h_\alpha(\mu)$ is c\`agl\`ad. Thus we can conclude that it is left continuous at $1$. 

To prove the continuity at $\infty$, we note that $H_\alpha(\nu)$ is decreasing so that $H_\infty(\nu)=\inf_{\alpha\in (0,\infty)} H_\alpha(\nu)$. Therefore, 
\[h_\infty(\mu)=\inf_n \frac{1}{n}H_\infty(\mu^{(n)})=
\inf_n \inf_{\alpha\in (0,\infty)}\frac{1}{n}H_\alpha(\mu^{(n)})=\inf_{\alpha\in (0,\infty)} h_\alpha(\mu).\]
As $h_\alpha(\mu)$ is decreasing in $\alpha$, we obtain continuity at $\infty$.

We next show that $h_\alpha(\mu)$ is strictly decreasing on $[0,1]$ whenever it is larger than $h_1(\mu)$. By Proposition~\ref{prop:convex}, $H_{1+\frac{1}{\beta }}(\mu ^{(n)})$
is a convex function for $\beta \in (-\infty ,-1)$. Passing to the limit, 
\[
h_{1+\frac{1}{\beta }}(\mu )=\lim_{n}\frac{1}{n}H_{1+\frac{1}{\beta }}(\mu
^{(n)})
\]
is also a convex function for $\beta \in (-\infty ,-1)$. 

Denote $\bar h(\beta) = h_{1+\frac{1}{\beta}}(\mu)$, so that $\bar h$ is convex for $\beta \in (-\infty,-1)$. Since the map $\beta \mapsto 1+1/\beta$ is strictly decreasing and $h_\alpha(\mu)$ is decreasing, $\bar h$ is increasing. Since it is convex, it must be strictly increasing whenever it is not equal to its infimum. Hence, $h_\alpha(\mu) = \bar h(1/(\alpha-1))$ is strictly decreasing in $(0,1)$ whenever it is not equal to its infimum, $h_1(\mu)$.

We have so far established the first and second part of the claim. For the third, suppose that $\mu$ is symmetric, and let $X_1,X_2,\ldots$ be i.i.d.\ random variables taking value in $G$ with law $\mu$. Let $Z_n = X_1 \cdots X_n$ be the $\mu$-random walk, so that the law of $Z_n$ is $\mu^{(n)}$. Let $Z_1',Z_2',\ldots$ be an additional, independent $\mu$-random walk. Then, since the random walk is symmetric,
\begin{align*}
    h_2(\mu) 
    =-\lim_n \frac{1}{n}\log\sum_{g}\mu^{(n)}(g)^2=-\lim_n \frac{1}{n}\log \Pr{Z_n = Z_n'} = -\lim_n \frac{1}{n}\log\mu^{(2n)}(e).
\end{align*}
Thus
\begin{align*}
    h_2(\mu) =-\lim_n \frac{1}{n}\log \mu^{(2n)}(e)=  -2\lim_{n} \frac{1}{2n}\log \mu^{(2n)}(e) = 2h_\infty(\mu),
\end{align*}
because for symmetric random walks the maximum probability is achieved at the identity. This shows the claim for $\alpha=2$.

Comparing this to the definition of $h_2(\mu)$, we see that the entire sum over $g \in G$ is dominated by $g=e$, 
\[
h_2(\mu) = -\lim_n \frac{1}{2n}\log\sum_{g }\left(\mu^{(2n)}(g)\right)^2 = -\lim_n\frac{1}{2n}\log \left(\mu^{(2n)}(e)\right)^2.\]
The same holds for $\alpha >2$. We note that 
\begin{align*}
\lim_{n}\frac{1}{2n}\log \mu ^{(2n)}(e)^{\alpha } 
&=\lim_{n}\frac{1}{2n}\log \mu ^{(2n)}(e)^{2}+\lim_{n}\frac{1}{2n}\log\mu
^{(2n)}(e)^{\alpha -2} \\
&= \lim_{n}\frac{1}{2n}\log \sum_{g}\mu ^{(2n)}(g)^{2}+\lim_{n}\frac{1}{%
2n}\log\mu^{(2n)}(e)^{\alpha -2} \\
&=\lim_{n}\frac{1}{2n}\log \sum_{g}\mu ^{(2n)}(g)^{2}\mu ^{(2n)}(e)^{\alpha
-2} \\
&\geq \lim_{n}\frac{1}{2n}\log \sum_{g}\mu ^{(2n)}(g)^{\alpha } \\
&\geq \lim_{n}\frac{1}{2n}\log \mu ^{(2n)}(e)^{\alpha }
\end{align*}
where the second equality follows from the $\alpha=2$ case, and the  second last inequality is because the maximum probability
is achieved at the identity.  Therefore, all the inequalities above are in fact equalities and 
\begin{equation*}
\alpha h_\infty(\mu) = -\lim_{n}\frac{1}{2n}\log \mu ^{(2n)}(e)^{\alpha }=-\lim_{n}\frac{1}{2n}\log
\sum_{g}\mu ^{(2n)}(g)^{\alpha } = (\alpha-1)h_\alpha(\mu).
\end{equation*}%

\end{proof}




\subsection{The free group}
Let $G = \mathbb{F}_d$ be the free group with $d \geq 2$ generators and let $\mu$ be the uniform distribution on the set of $d$ generators and their inverses. As above, let $X_1,X_2,\ldots$ be i.i.d.\ random variables taking value in $G$ with law $\mu$. Let $Z_n = X_1 \cdots X_n$ be the $\mu$-random walk, and let $D_n = |Z_n|$ be the distance between $Z_n$ and the origin. Then $D_1,D_2,\ldots$ is a Markov chain, where $\Pr{D_n=1}{D_{n-1}=0}=1$, and for $r > 0$, $\Pr{D_n=r-1}{D_{n-1}=r}=\frac{1}{2d}$ and $\Pr{D_n=r+1}{D_{n-1}=r}=1-\frac{1}{2d}$.

By the symmetry of the random walk, if $|g|=|h|$ then $\mu^{(n)}(g)=%
\mu^{(n)}(h)$. The number of elements $g \in \mathbb{F}_d$ with $|g|=k$ is
equal to $2d\cdot (2d-1)^{k-1}$. Hence
\begin{equation}  \label{Zn}
\mu^{(n)}(g) = \Pr{Z_n=g} = \frac{\Pr{D_n=k}}{2d\cdot (2d-1)^{k-1}}.
\end{equation}
It follows that when $\alpha\neq 1$,
\begin{align*}
  H_{\alpha }(\mu ^{(n)})
  &=\frac{1}{1-\alpha }\log \sum_{g}{\Pr {Z_{n}=g}^{\alpha }} \\
  &=\frac{1}{1-\alpha }\log \sum_{k=0}^{n}\sum_{|g|=k}{\left( \frac{\Pr {
D_{n}=k}}{2d\cdot (2d-1)^{k-1}}\right) ^{\alpha }} \\
  &=\frac{1}{1-\alpha }\log \sum_{k=0}^{n}\Pr{D_{n}=k}^{\alpha
}(2d\cdot (2d-1)^{k-1})^{1-\alpha }\;,
\end{align*}
and so
\begin{align*}
  h_\alpha(\mu)
  =\lim_n \frac{1}{n}\frac{1}{1-\alpha }\log \sum_{k=0}^{n}\Pr{D_{n}=k}^{\alpha
}(2d\cdot (2d-1)^{k-1})^{1-\alpha }.
\end{align*}
And since the sum is bounded below by its maximum and above by $n+1$ times its maximum,
\begin{equation}
\label{Freeh}
\begin{split}
  h_\alpha(\mu)
  &=\lim_n \frac{1}{n}\frac{1}{1-\alpha }\log \max_{k \in \{0,\ldots,n\}} \Pr{D_{n}=k}^{\alpha
}(2d-1)^{(1-\alpha )k}\\
  &=\lim_n \frac{1}{n}\frac{1}{1-\alpha }\max_{k \in \{0,\ldots,n\}} \alpha \log \Pr{D_{n}=k} + (1-\alpha )k\log (2d-1)\\
  &=\lim_n \frac{1}{n}\frac{1}{1-\alpha }\max_{k \in \{\lceil n/2 \rceil,\ldots,n\}} \alpha \log \Pr{D_{n}=2k-n} + (1-\alpha )(2k-n)\log (2d-1),
  \end{split}
\end{equation}
where the last equality is just a change of variables.

Let $E_1,E_2,\ldots$ be a random walk on $\Z$ with $\Pr{E_{n+1} = E_n+1} = 1-\frac{1}{2d}$ and $\Pr{E_{n+1} = E_n-1} = \frac{1}{2d}$.  The only difference between $E_n$ and $D_n$ is that $D_n$ is reflected at $0$ while $E_n$ is allowed to travel to the left of $0$. We then try to understand the probability distribution of $D_n$ from that of $E_n$.
\begin{lemma}
    \label{EnDn} For any $k\geq 0$ and any $n$, we have 
\[\frac{1}{n}\mathbb{P}[E_{n}=k]\leq\mathbb{P}[D_{n}=k]\leq 2\mathbb{P}[E_{n}=k].\]
\end{lemma}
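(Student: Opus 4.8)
The plan is to work directly with the transition structure of the two chains, handling the two inequalities by different methods. Throughout I may assume $0\le k\le n$ and $k\equiv n\pmod 2$, since otherwise both $\mathbb P[D_n=k]$ and $\mathbb P[E_n=k]$ vanish. Write $p=1-\tfrac1{2d}$, $q=\tfrac1{2d}$, $u=\tfrac{n+k}2$, $\ell=\tfrac{n-k}2$, and set $f(n,k)=\mathbb P[D_n=k]$ and $g(n,k)=\mathbb P[E_n=k]=\binom n\ell p^uq^\ell$. Note $p\ge \tfrac34\ge\tfrac12$ since $d\ge 2$.

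For the lower bound I would compare trajectories combinatorially. A trajectory with $D_n=k$ is exactly a nearest-neighbour path from $0$ to $k$ of length $n$ that stays $\ge 0$; any such path has $u$ up-steps and $\ell$ down-steps, and its $D$-probability is a product of factors $p$ (up from a positive site), $q$ (down), and $1$ (up from $0$). Each factor is at least the corresponding free-walk factor (since $1\ge p$), so each path has $D$-weight at least $p^uq^\ell$, whence $\mathbb P[D_n=k]\ge p^uq^\ell\,N^+(n,k)$, where $N^+(n,k)$ counts nonnegative paths from $0$ to $k$. By the reflection principle $N^+(n,k)=\binom n\ell-\binom n{\ell-1}$, so $\mathbb P[D_n=k]/\mathbb P[E_n=k]\ge N^+(n,k)/\binom n\ell=\tfrac{k+1}{u+1}$, and a direct check (using $k\le n$, hence $u+1\le n+1$) gives $\tfrac{k+1}{u+1}\ge\tfrac1n$. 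This yields the left inequality.

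For the upper bound I would argue by induction on $n$, which forces a strengthening of the statement. The dynamics of $D$ give $f(n,k)=pf(n-1,k-1)+qf(n-1,k+1)$ for $k\ge 2$, together with $f(n,1)=f(n-1,0)+qf(n-1,2)$ and $f(n,0)=qf(n-1,1)$, while $g$ obeys the free recursion $g(n,k)=pg(n-1,k-1)+qg(n-1,k+1)$ for all $k\in\Z$. The naive claim $f\le 2g$ is \emph{not} preserved at $k=1$, because the forced up-step from $0$ contributes $f(n-1,0)$ with coefficient $1$ rather than $p$. I would therefore carry the joint hypothesis
\[
H(n):\quad f(n,k)\le 2g(n,k)\ \ (k\ge 1),\qquad f(n,0)\le 2p\,g(n,0).
\]
The cases $k\ge 2$ and $k=1$ of $H(n)$ follow immediately from $H(n-1)$ and the recursions, the $k=1$ case being exactly where the sharper bound $f(n-1,0)\le 2p\,g(n-1,0)$ is needed. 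The boundary case $k=0$ uses the identity $g(m,1)=\tfrac pq\,g(m,-1)$ (from the symmetry $\binom m{(m+1)/2}=\binom m{(m-1)/2}$), which reduces the required inequality to $2p\ge 1$; the base case $n=0$ also holds since $2p\ge 1$. Taking $k\ge 1$ in $H(n)$ gives $\mathbb P[D_n=k]\le 2\mathbb P[E_n=k]$.

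The main obstacle is the upper bound, and specifically the behaviour at the reflecting site: the rule ``move up with probability $1$ from $0$'' injects extra mass near the origin, so a factor-$2$ estimate cannot be propagated by the recursion on its own. The device that overcomes this is to track the extra mass through the \emph{sharper} constant $2p$ at $k=0$; this is exactly strong enough to absorb the coefficient-$1$ term appearing at $k=1$, while remaining self-consistent at $k=0$ precisely because $p\ge q$.
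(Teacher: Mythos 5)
Your proof is correct, and while its overall architecture (a reflection/path-counting argument for the lower bound, induction on $n$ with a strengthened hypothesis at the origin for the upper bound) parallels the paper's, the execution differs in two substantive ways. For the lower bound, the paper couples $D$ and $E$ and invokes the Ballot theorem, treating $k=0$ and $k>0$ separately; you instead compare the two walks weight-by-weight on each nonnegative path (each such path has $D$-weight at least its $E$-weight $p^{u}q^{\ell}$, since the only discrepancy is the factor $1\geq p$ at up-steps from $0$) and count nonnegative paths by the reflection principle, which handles all $k\geq 0$ uniformly --- the only point to make explicit is that for $k=0$ the final inequality $\tfrac{k+1}{u+1}=\tfrac{2}{n+2}\geq\tfrac1n$ requires $n\geq 2$, which holds because $n$ must then be even and positive, so your parenthetical ``$u+1\le n+1$'' justification should be replaced by this short case check. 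For the upper bound, both arguments hinge on carrying a sharper constant at $k=0$ to absorb the forced up-step at the reflecting site, but the paper establishes its boundary estimate $\mathbb{P}[D_n=0]\leq\tfrac{2d-1}{2d-2}\mathbb{P}[E_n=0]$ externally (via the coupling and a geometric-series bound on $\mathbb{P}[E_n\leq 0]$) and only then feeds it into the induction, whereas your boundary hypothesis $f(n,0)\leq 2p\,g(n,0)$ is itself propagated by the induction, closed at $k=0$ by the identity $g(m,1)=\tfrac{p}{q}\,g(m,-1)$ together with $2p\geq 1$. Your version is more self-contained and dispenses with the coupling entirely; the paper's yields the slightly sharper boundary constant $\tfrac{2d-1}{2d-2}\leq 2p$, though both suffice for the stated factor of $2$.
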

\begin{proof}
We begin by considering the case where $k=0$. In particular, we will prove
that for any $n$, 
\begin{equation*}
\frac{1}{n}\mathbb{P}[E_{n}=0]\leq \mathbb{P}[D_{n}=0]\leq \frac{2d-1}{2d-2}%
\mathbb{P}[E_{n}=0].
\end{equation*}%
For this argument, we may assume that $n$ is even (since, if $n$ were odd, then both sides of the equation above would equal zero). \newline
We define a coupling of $D_{i}$ and $E_{i}$ by first sampling $E_{i}$ and
then setting 
\begin{eqnarray*}
D_{i+1}-D_{i} &=&E_{i+1}-E_{i}\quad \text{ if }D_{i}\geq 1 \\
D_{i+1} &=&D_{i}+1\quad \quad \;\text{ if }D_{i}=0.
\end{eqnarray*}%
From above, we see that the random walks are synchronous but their positions
may be shifted upon returning to $0$, which gives $E_{n}\leq D_{n}$. Hence, 
\begin{equation*}
\mathbb{P}[D_{n}=0]\leq \mathbb{P}[E_{n}\leq 0].
\end{equation*}

Then for $n=2m$ even, we have 
\begin{eqnarray*}
\mathbb{P}[E_{2m}\leq 0] &=&\sum_{i=0}^{m}\mathbb{P}[E_{2m}=-2i] \\
&=&\sum_{i=0}^{m}\left( \frac{2d-1}{2d}\right) ^{m-i}\left( \frac{1}{2d}%
\right) ^{m+i}\binom{2m}{m-i} \\
&\leq &\sum_{i=0}^{m}\left( \frac{2d-1}{2d}\right) ^{m-i}\left( \frac{1}{2d}%
\right) ^{m+i}\binom{2m}{m} \\
&\leq &\frac{1}{1-\frac{1}{2d-1}}\left( \frac{2d-1}{2d}\right) ^{m}\left( 
\frac{1}{2d}\right) ^{m}\binom{2m}{m} \\
&=&\frac{2d-1}{2d-2}\mathbb{P}[E_{2m}=0]
\end{eqnarray*}%
Thus, we have shown that when $k=0$, 
\begin{equation*}
\mathbb{P}[D_{n}=0]\leq \frac{2d-1}{2d-2}\mathbb{P}[E_{n}=0].
\end{equation*}

For the lower bound, we first note that if $E_{n}$ remains nonnegative then $%
D_{n}=E_{n}$,
which follows from the coupling defined above. Therefore, 
\begin{equation*}
\left\{ E_{2m}=0\text{ and }E_{i}\geq 0\text{ for }i<2m\right\} \subset
\{D_{2m}=0\}
\end{equation*}%
and so $\mathbb{P}[E_{2m}=0$ and $E_{i}\geq 0$ for $i<2m]$ $\leq \mathbb{P}%
[D_{2m}=0]$. To show the  desired lower bound, note that by the Bertrand's Ballot Theorem, 
\begin{eqnarray*}
\mathbb{P}[D_{2m} =0]\geq \mathbb{P}[E_{2m}=0\text{ and }E_{i}\geq 0\text{
for }i<2m] 
=\frac{1}{m+1}\mathbb{P}[E_{2m}=0].
\end{eqnarray*}

For $k>0$, note that by again using the Bertrand's Ballot Theorem we have that
\begin{align*}
\mathbb{P}[D_{n} =k]\geq \mathbb{P}[E_{n}=k\text{ and }E_{i}> 0\text{
for }i<n] = \frac{k}{n}\mathbb{P}[E_{n}=k] \geq \frac{1}{n}\mathbb{P}[E_{n}=k].
\end{align*}
This shows the lower bound. For the upper bound, we will use induction on $%
n$ to prove that for any $k$
\begin{equation*}
\mathbb{P}[D_{n}=k]\leq 2\mathbb{P}[E_{n}=k].
\end{equation*}%
The base case $n=0$ is trivial. For the
inductive step, we suppose that the above inequality holds for some $n$ and
all $k$. For $k>1$, we have%
\begin{eqnarray*}
\mathbb{P}[D_{n+1}=k] &=&\frac{2d-1}{2d}\mathbb{P}[D_{n}=k-1]+\frac{1}{2d}%
\mathbb{P}[D_{n}=k+1] \\
&\leq &2\left( \frac{2d-1}{2d}\mathbb{P}[E_{n}=k-1]+\frac{1}{2d}\mathbb{P}%
[E_{n}=k+1]\right)  \\
&=&2\mathbb{P}[E_{n+1}=k].
\end{eqnarray*}%

For $k=1$, we have 
\begin{eqnarray*}
\mathbb{P}[D_{n+1}=1] &=&\mathbb{P}[D_{n}=0]+\frac{1}{2d}\mathbb{P}[D_{n}=2]
\\
&\leq &\frac{2d-1}{2d-2}\mathbb{P}[E_{n}=0]+\frac{1}{2d}\cdot 2\mathbb{P}%
[E_{n}=2] \\
&\leq &2\left( \frac{2d-1}{2d}\mathbb{P}[E_{n}=0]+\frac{1}{2d}\mathbb{P}%
[E_{n}=2]\right)  \\
&=&2\mathbb{P}[E_{n+1}=1].
\end{eqnarray*}%
This completes the induction step.
\end{proof}

The new random walk $E_n$ may be understood very well. First note that,
\begin{align*}
  \Pr{E_n=2k-n}
  &= \binom{n}{k}\left(\frac{2d-1}{2d}\right)^k\left(\frac{1}{2d}\right)^{n-k}.
\end{align*}
By a standard estimate, 
\begin{align*}
 \frac{1}{n+1}\ee^{nH(k/n)} \leq \binom{n}{k} \leq \ee^{nH(k/n)},   
\end{align*}
where $H(p) = -p\log p-(1-p)\log(1-p)$, and so
\begin{align*}
  \log \Pr{E_n=2k-n}
  = -n\log (2d)+n H(k/n) +k\log (2d-1)+o(n).
\end{align*}
Furthermore, by Lemma~\ref{EnDn}, for $k \geq 0$, $\log \Pr{D_n=k} = \log \Pr{E_n=k} + o(n)$. Hence
\begin{align*}
  \frac{1}{n}\log \Pr{D_n=2k-n} = -\log (2d)+ H(k/n) +k/n\log (2d-1)+o(1)
\end{align*}
for $k \geq n/2$. Inserting this into (\ref{Freeh}) yields
\begin{align*}
  h_\alpha(\mu)
  =\lim_n \frac{1}{1-\alpha }\max_{k \in \{\lceil n/2 \rceil,\ldots,n\}} &\alpha (-\log (2d)+ H(k/n) +k/n\log (2d-1))\\
  &\quad+ (1-\alpha )(2k/n-1)\log (2d-1).
\end{align*}
The expression being maximized is a function of $k/n$, which we denote by
$f_\alpha \colon [1/2,1] \to \R$:
\begin{align*}
    f_\alpha(p) = \alpha (-\log (2d)+ H(p) +p\log (2d-1))+ (1-\alpha )(2p-1)\log (2d-1).
\end{align*}

\vspace{2mm} Using this construction, we may now prove Theorem~\ref{thm:main-free}.

\begin{proof}[Proof of Theorem~\ref{thm:main-free}]
For $\alpha\neq 1$, from the notation established above, we have
\begin{align*}
    h_\alpha(\mu)
  =\frac{1}{1-\alpha }\lim_n \max_{k \in \{\lceil n/2 \rceil,\ldots,n\}}f_\alpha(k/n).
\end{align*}
Since $f_\alpha$ is continuous,
\begin{align*}
h_\alpha(\mu) = \frac{1}{1-\alpha }\max_{p \in [1/2,1]}f_\alpha(p).
\end{align*}
Since $f_\alpha$ is furthermore differentiable and concave, this maximum is achieved either at the end points of the interval, or else at the unique $p_\alpha^*$ at which the derivative of $f_\alpha$ vanishes. A simple calculation shows that this is given by
\begin{align*}
    p_\alpha^* = \frac{(2d-1)^{2/\alpha}}{(2d-1)+(2d-1)^{2/\alpha}}
\end{align*}
for $\alpha \in (0,2)\backslash\{1\}$. For these $\alpha$, we have
\begin{align}\label{eq:free_entropy}
  h_\alpha(\mu) &= \frac{1}{1-\alpha }f_\alpha(p_\alpha^*),
\end{align}
where we recall that
\begin{align*}
    f_\alpha(p) = \alpha (-\log (2d)+ H(p) +p\log (2d-1))+ (1-\alpha )(2p-1)\log (2d-1).
\end{align*}

This can be written out as an elementary (but unwieldy) function of $\alpha$, which we omit. Importantly, this implies that $h_\alpha(\mu)$ is an analytic function of $\alpha$ on $(0,2)\backslash\{1\}$. Moreover, $f_\alpha(p_\alpha^*)$, viewed as a complex function, is holomorphic around a neighborhood of $\alpha=1$. Some computations show that $f_\alpha(p_\alpha^*)$ has a zero of order 1 at $\alpha=1$ so $h_\alpha(\mu) = \frac{1}{1-\alpha }f_\alpha(p_\alpha^*)$ is a meromorphic function whose singularity at $\alpha=1$ can be removed. In other words,  $h_\alpha(\mu)$ is real analytic at $\alpha=1$. 

For $\alpha\geq2$, the R\'enyi  entropy enters a difference phase $h_\alpha(\mu)=\frac{\alpha}{\alpha-1}h_\infty(\mu)$. Comparing this with (\ref{eq:free_entropy}), we see that $h_\alpha(\mu)$ is first differentiable, but not second differentiable, at $\alpha=2$.

\end{proof}

\subsection{The lamplighter group}

Let $G = L \wr \Z$ be the lamplighter group on $\Z$ with lamps in some non-trivial finite group $L$. We denote an element of $G$ by $g = (f,z)$ where $f \colon \Z \to L$ has finite support, and $z \in \Z$. Denote by $\pi:G\rightarrow \mathbb{Z}$ be the group homomorphism $\pi(f,z):=z$ and denote by $\tau(f,z):=f$ the projection to the lamp configurations. Let $\mu$ be a symmetric, finitely supported measure on $G$.


We prove Theorem~\ref{thm:main-ll-positivity} by analyzing ``tilted'' versions of the $\mu$-random walk. For $t \geq 0$ define the tilted measure $\mu_t$ by
\begin{align*}
  \mu_t(g) = C_t^{-1} \mu(g)\ee^{t\pi(g)}
\end{align*}
where
\begin{align*}
    C_t = \sum_{g' \in G} \mu(g')\ee^{t\pi(g')}
\end{align*}
is the moment generating function of $\pi_*\mu$, evaluated at $t$. Note that tilting commutes with convolution, so that $\mu_t^{(k)}$ is well-defined.

Note that $C_t'(0) = \sum_h \mu(h) \pi(h)=0$ because $\mu$ is symmetric, and hence the expectation of $\pi_*\mu$ is equal to zero. The second derivative of $C_t$ at zero is the variance $v >0$ of $\pi_*\mu$, and hence $C_t = 1+v t^2 + o(t^2)$.

Let $Z_1,Z_2,\ldots$ be a $\mu_t$-random walk, and let $Q^t_n = \{ \pi(Z_1),\ldots,\pi(Z_n)\}$ be the set of locations visited by the lamplighter. 
\begin{lemma}
\label{lemma:visited}
    There is a constant $q >0$ such that, for all $t>0$ small enough,  $\liminf_n \frac{1}{n}|Q^t_n| \geq q t$ almost surely.
\end{lemma}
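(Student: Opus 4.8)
The plan is to use the fact that the tilt turns the symmetric, zero-drift projection $\pi_*\mu$ into a measure with strictly positive drift of order $t$, and then to convert this positive drift into a linear lower bound on the number of sites visited by the lamplighter.

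First I would compute the drift of the tilted walk in the $\Z$-coordinate. Writing $Z_k = X_1\cdots X_k$ with $X_i$ i.i.d.\ of law $\mu_t$, the projection $\pi(Z_n) = \sum_{i=1}^n \pi(X_i)$ is a sum of i.i.d.\ bounded increments whose common mean is
\[
  d_t := \sum_{g} \pi(g)\,\mu_t(g) = \frac{C_t'}{C_t},
  \qquad C_t' = \frac{\dd C_t}{\dd t}.
\]
Since $C_0 = 1$, $C_0' = 0$ (by symmetry of $\mu$) and $C_0'' = v$ (the variance of $\pi_*\mu$), the quotient rule gives $d_0 = 0$ and derivative $v$ at $t=0$, so $d_t = vt + o(t)$; hence there is $t_0 > 0$ with $d_t \ge \tfrac{v}{2}t$ for all $0 < t < t_0$. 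By the strong law of large numbers, $\tfrac1n\pi(Z_n) \to d_t$ almost surely.

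Second, I would prove an elementary deterministic lower bound relating $|Q_n^t|$ to the displacement $\pi(Z_n)$. Let $J := \max_{g \in \supp\mu}|\pi(g)|$ be the maximal step of the projected walk, so that consecutive values of $\pi(Z_k)$ differ by at most $J$. Partitioning $\Z$ into consecutive blocks of length $J$, a single step changes the block index by at most one; consequently the trajectory can never skip a block, and between the times it attains its running minimum $m_n$ and running maximum $M_n$ it enters every intermediate block, contributing at least one distinct visited location per block. Since $\pi(Z_1)$ lies within $J$ of the origin, the range satisfies $M_n - m_n \ge \pi(Z_n) - J$, which yields
\[
  |Q_n^t| \ge \frac{M_n - m_n}{J} \ge \frac{\pi(Z_n)}{J} - 1.
\]
Combining the two steps gives, almost surely for every $0 < t < t_0$,
\[
  \liminf_n \frac1n |Q_n^t| \ \ge\ \liminf_n \frac{\pi(Z_n)}{Jn} \ =\ \frac{d_t}{J} \ \ge\ \frac{v}{2J}\, t,
\]
so the claim holds with $q = v/(2J)$.

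I expect the main (though modest) obstacle to be the geometric bound of the second step: the number of distinct sites visited can in principle be far smaller than the total displacement when the walk makes large jumps, so the inequality must genuinely exploit the bounded-increment property through the no-block-skipping argument rather than na\"ively identifying visited sites with displacement. The drift computation and the invocation of the strong law are routine once the uniform-in-$t$ linear lower bound $d_t \ge \tfrac{v}{2}t$ on $(0,t_0)$ is established.
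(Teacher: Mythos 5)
Your proof is correct and follows essentially the same route as the paper: establish a drift of order $t$ for the tilted projection (you via the Taylor expansion $d_t = C_t'/C_t = vt + o(t)$, the paper via elementary inequalities giving $\Delta_t \ge \beta t$), then bound the number of visited sites below by displacement divided by the maximal step size. Your direct appeal to the strong law of large numbers is in fact slightly cleaner than the paper's invocation of the weak law for an almost-sure $\liminf$ statement, and your block argument is just a careful writing-out of the same ``must visit at least $N/M$ sites to reach $N$'' observation.
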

\begin{proof}
    The tilted random walk has a positive drift of%
\begin{eqnarray*}
\Delta_{t} &:=&C_{t}^{-1}\sum_{g}\mu (g)\exp (t\pi (g))\pi (g) \\
&=&C_{t}^{-1}\left( \sum_{g:\pi (g)<0}\mu (g)\exp (t\pi (g))\pi
(g)+\sum_{\pi (g)>0}\mu (g)\exp (t\pi (g))\pi (g)\right)  \\
&\geq &C_{t}^{-1}\left( \sum_{g:\pi (g)<0}\mu (g)\pi (g)+\sum_{\pi (g)>0}\mu
(g)(1+t\pi (g))\pi (g)\right)  \\
&\geq&C_{t}^{-1}t\sum_{\pi (g)>0}\mu (g)(\pi (g))^{2}.
\end{eqnarray*}%
As $C_{t}=1+o(t)$, we have 
\[
\Delta_{t}>\beta t
\]%
for $t>0$ small enough, where we set $\beta :=\frac{1}{2}\sum_{\pi (g)>0}\mu
(g)(\pi (g))^{2}$. Note that $\beta >0$.

Since $\mu $ has finite support, there exists some $M>0$ such that $%
\left\vert \pi (g)\right\vert <M$ for any $g$ in the support of $\mu $. Then
the step size under $\mu _{t}$ is also bounded by $M$. Let $%
Z_{1},Z_{2},\ldots $ be a $\mu _{t}$-random walk. For any $\varepsilon >0$,
by law of large numbers, we have%
\[
\lim_{n}\mathbb{P}(\pi (Z_{n})>(\Delta_{t}-\varepsilon )n)=1.
\]%
Note that in order for the lamplighter to arrive at some position $N$ at
time $n$, the lamplighter must have visited at least $N/M$ many positions on
the way. Thus. 
\[
\left\vert Q_{n}^{t}\right\vert \geq \frac{\pi (Z_{n})}{M}.
\]%
Therefore, for $t>0$ small enough,  
\[
\liminf_{n}\frac{1}{n}\left\vert Q_{n}^{t}\right\vert \geq \frac{%
\Delta_{t}-\varepsilon }{M}>\frac{\beta t-\varepsilon }{M}.
\]%
Set $q:=\frac{\beta }{M}$. As $\varepsilon $ is arbitrary, we have 
\[
\liminf_{n}\frac{1}{n}\left\vert Q_{n}^{t}\right\vert \geq qt.
\]
\end{proof}

For the tilted random walk, the lamp status at the origin is a non-trivial tail random variable so the random walk has positive asymptotic Shannon entropy: $h_1(\mu_t)>0$. The next proposition shows that this entropy grows at least linearly with $t$, for small $t$. 
\begin{proposition}
\label{prop:ll}
  Suppose $\mu$ is non-degenerate. There is a constant $c >0$ such that $h_1(\mu_t) \geq c t$ for $t>0$ small enough.
\end{proposition}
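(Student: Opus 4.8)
The plan is to bound the Avez entropy of the tilted walk from below by the entropy rate of the \emph{lamp configuration} alone, and then show that the positive drift created by tilting forces that configuration to accumulate entropy at a rate proportional to $t$. Write $Z_n=(F_n,\pi(Z_n))$ with $F_n=\tau(Z_n)$ the lamp configuration, and let $H$ denote Shannon entropy. Since $\tau$ is a pushforward, property (iii) of $H_\alpha$ gives $H(Z_n)\ge H(F_n)$, so $h_1(\mu_t)=\lim_n\frac1n H(Z_n)\ge \limsup_n\frac1n H(F_n)$, and it suffices to show the right-hand side is at least $ct$. The linear factor of $t$ will enter through Lemma~\ref{lemma:visited}: the lamplighter visits at least $qtn$ distinct sites by time $n$, and since the tilted walk is transient (positive drift), all but a vanishing fraction of the sites visited by time $n/2$ are \emph{permanently abandoned}, so their lamp values are already finalized, $F_n(x)=F_\infty(x)$. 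Thinning this set to keep sites more than $2W$ apart, where $W$ bounds both $|\pi(g)|$ and the lamp support of every $g\in\mathrm{supp}(\mu)$, I obtain a set $S_n$ of separated finalized sites with $|S_n|\ge \frac{qt}{2W+1}n\,(1-o(1))$.

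The second step is to show these finalized lamp values carry a uniformly positive amount of entropy \emph{each}, additively. Listing $S_n=\{x_1<\dots<x_m\}$ from left to right and applying the chain rule,
\[
H\big((F_\infty(x_j))_{j}\big)=\sum_{j=1}^m H\big(F_\infty(x_j)\,\big|\,F_\infty(x_1),\dots,F_\infty(x_{j-1})\big),
\]
so it is enough to bound each conditional term below by a constant $\kappa>0$ independent of $t$ and $n$. The separation by $2W$ guarantees that $F_\infty(x_j)$ depends only on increments taken while the walker is in the window $[x_j-W,x_j+W]$, disjoint from the windows governing the sites to its left. Invoking the strong Markov property at the first entry into this window (after the lamps to the left have been finalized), the walk's behaviour inside the window is a fresh, non-degenerate piece of the trajectory, from which I would extract the entropy injected during this passage.

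The crux, and the main obstacle, is establishing this uniform lower bound $\kappa$ on the per-site finalized-lamp entropy. The subtlety is that the randomness in $F_\infty(x)$ may come either from genuinely random lamp increments or purely from the random local time the walker spends near $x$; conditioning on the full $\Z$-trajectory destroys the latter source (for the walk that flips the lamp beneath it with probability bounded away from $0$ and otherwise steps, $F_\infty(x)$ is the parity of the number of pauses at $x$---determined by the trajectory, yet genuinely random), so the bound must be obtained \emph{without} conditioning on the path. I would argue that non-degeneracy of $\mu$ together with $L$ being non-trivial guarantees a uniformly positive-probability ``randomizing action'' during each fresh passage through a window---either a random lamp increment or an odd-versus-even number of returns---forcing $H(F_\infty(x_j)\mid F_\infty(x_1),\dots,F_\infty(x_{j-1}))\ge\kappa$. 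Combining this with $|S_n|\gtrsim tn$ gives $\frac1n H(F_n)\ge \frac{q\kappa}{2W+1}\,t\,(1-o(1))$, whence $h_1(\mu_t)\ge ct$ with $c=\tfrac{q\kappa}{2W+1}$. The delicate points to handle carefully are making ``fresh passage'' rigorous despite last-exit times not being stopping times (which I would resolve through a regeneration or first-entry decomposition) and checking that the randomizing action's probability and entropy are bounded below uniformly in the site and in small $t$.
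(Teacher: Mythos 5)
Your first half is the same as the paper's: use Lemma~\ref{lemma:visited} to get $\gtrsim tn$ distinct visited sites, and lower-bound $h_1(\mu_t)$ by the entropy of the lamp configuration. But the second half --- the uniform per-site lower bound $H(F_\infty(x_j)\mid F_\infty(x_1),\dots,F_\infty(x_{j-1}))\ge\kappa$ --- is the actual content of the proposition, and you have flagged it as ``the main obstacle'' without closing it. The difficulties you name are genuine and not merely technical. First, the lamps to the left of $x_j$ are finalized only at the walker's \emph{last} exit from their windows, which may well occur after the first entry into $x_j$'s window; so the conditioning event is not measurable with respect to the pre-entry $\sigma$-field and the strong Markov property at first entry does not apply as stated. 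Second, your own example (lamp flipped by a ``pause'' step) shows that the randomness in $F_\infty(x)$ can live entirely in the local-time structure of the trajectory, so there is no single ``randomizing increment'' whose occurrence with probability $\ge p_0$ immediately yields conditional entropy $\ge\kappa$; one would have to show that the parity (or $L$-valued product) of the lamp increments during a passage is non-degenerate uniformly over what the already-revealed left lamps disclose about that passage. Neither step is supplied, and for a general non-degenerate finitely supported $\mu$ (arbitrary finite lamp group $L$, steps that move and flip several lamps at once) I do not see how to get them without real work.

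The paper avoids all of this with one structural device that you are missing. First lazify ($\eta=\tfrac12\mu+\tfrac12\delta_e$ changes $h_1$ of the tilt only by a bounded factor), then pass to a convolution power: non-degeneracy plus $\mu(e)>0$ gives a $k$ and $\eps>0$ with $\mu^{(k)}(f^\ell,0)>2\eps$ for every $\ell\in L$, so after replacing $\mu$ by $\mu^{(k)}$ one can write $\mu_t=\eps\nu+(1-\eps)\bar\mu_t$ with $\nu$ the \emph{uniform} measure on $\{(f^\ell,0):\ell\in L\}$. Realizing the walk with a Bernoulli($\eps$) coin that decides whether each increment is drawn from $\nu$, and conditioning on the coins and on the $\bar\mu_t$-increments, the trajectory in $\Z$ is determined (the $\nu$-increments do not move the walker) and the lamps at the sites $P_n$ where a $\nu$-increment occurred are \emph{exactly} i.i.d.\ uniform on $L$, contributing entropy $|P_n|\log|L|$ with $\E{|P_n|}\ge\eps\E{|Q_n^t|}\ge \eps q t n$. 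This gives your $\kappa$ for free ($\kappa=\log|L|$), needs no transience or finalization argument (everything is evaluated at time $n$), and requires no last-exit or regeneration decomposition. I'd recommend adopting that reduction rather than trying to repair the fresh-passage argument.
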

\begin{proof}
  We first show that we can assume that $\mu(e)>0$. Let $\eta = \frac{1}{2}\mu + \frac{1}{2}\delta_e$, where $\delta_e$ is the point mass at the identity of $G$. Define the tilted measures $\eta_t$ similarly to the definition of $\mu_t$. Then
  \begin{align*}
      \eta_t(g)  = \frac{1}{\frac{1}{2}C_t+\frac{1}{2}}\left( \frac{1}{2}\mu(g)\ee^{t\pi(g)}+\frac{1}{2}\delta_e(g)\right) = \frac{1}{\frac{1}{2}C_t+\frac{1}{2}}\left(\frac{1}{2}C_t\mu_t(g)+\frac{1}{2}\delta_e(g)\right),
  \end{align*}
  and so
  \begin{align*}
    \eta_t = \alpha_t \mu_t + (1-\alpha_t)\delta_e
  \end{align*}
  for some $\alpha_t$ that tends continuously to $1/2$ as $t$ tends to zero.
  
  Since $h_1(\alpha \mu_t + (1-\alpha)\delta_e) = \alpha h_1(\mu_t)$, it follows that if $h_1(\eta_t) > ct$ then $h_1(\mu_t) > ct$ for all $t$ small enough. It hence suffices to prove the claim for $\eta$, so that it follows for $\mu$. We thus assume without loss of generality that $\mu(e) > 0$.

  Denote by $f^\ell \colon \mathbb{Z} \to L$ the function given $f^\ell(0)=\ell$ and $f^\ell(z)=e$ for all $z \neq 0$, and where, by slight notation overloading, $e$ is the identity of $L$.
  
  Since $\mu$ is non-degenerate,  and since $\mu(e)>0$,   there is some $k$ large enough and $\eps>0$ such that $\mu^{(k)}(f^\ell,0)>2\eps$ for all $\ell \in L$. We hence again assume without loss of generality that this already holds for $\mu$, since $h(\mu_t^{(k)}) = k h(\mu_t)$. 
  For all $t$ small enough we will have that $\mu_t(f^\ell,0)>\eps$ for all $\ell \in L$. Let $\nu$ be the uniform distribution over $\tilde L = \{(f^\ell,0)\,:\,\ell \in L\}$. Then we can write $\mu_t = \eps\nu +(1-\eps)\bar\mu_t$ where 
  \begin{align*}
      \bar\mu_t = \frac{\mu_t - \eps\nu}{1-\eps}
  \end{align*}
  is a probability measure.

  Let $\bar X_1,\bar X_2,\ldots$ be i.i.d.\ random variables with distribution $\bar \mu_t$. Let $W_n$ be i.i.d.\ random variables with distribution $\nu$.
   Let $B_1,B_2,\ldots$ be i.i.d.\ Bernoulli random variables with $\Pr{B_n=1}=\eps$.
  Let 
  \begin{align*}
      X_n = 
      \begin{cases}
        \bar X_n&\text{if } B_n=0\\
        W_n&\text{if } B_n=1. 
      \end{cases}
  \end{align*}
  Then $X_n$ has distribution $\mu_t$. Let $Z_n = X_1 \cdot X_2 \cdots X_n$ be a $\mu_t$-random walk on $G$.

  We would like to show that $H_1(Z_n) \geq n c t$ for some $c$, all $t$ small enough and all $n$ large enough. 

  Let 
  \begin{align*}
      P_n = \{ z \in Z\,:\, \exists k \leq n, \pi(Z_k)=z, B_k=1\}.
  \end{align*}
  This is the random set of locations at which the lamplighter was at times in which $B_k=1$. Since $\pi(W_n)=0$, the probability that $\pi(Z_{n}) \in P_n$ is at least $\eps$. Hence $\E{|P_n|} \geq \eps\E{|Q^t_n|}$, and by Lemma~\ref{lemma:visited}, 
  there is a constant $c$ such that for all $t$ small enough, $\E{|P_n|} \geq n c  t$ for all $n$ large enough. 
  
  Denote $F_n = \tau(Z_n)$ the lamp configuration at time $n$. Suppose for a moment that $\bar{X}_1,\ldots\bar{X}_n$ and $B_1,\ldots,B_n$ are given. As $\pi(W_k)=0$, we will know the location of the lamplighter at all times up to $n$ and so we can deduce $P_n$. We will also know the state of the lamps outside of $P_n$. On $P_n$, the conditional distribution of lamps is i.i.d.\ uniform, since $W_k$ are uniform. Hence, the conditional entropy of $F_n$ is exactly $|P_n|\log |L|$ and
  \begin{align*}
      H_1(Z_n) 
      &\geq H_1(Z_n|\bar{X}_1,\ldots\bar{X}_n,B_1,\ldots,B_n)\\
      &\geq H_1(F_n|\bar{X}_1,\ldots\bar{X}_n,B_1,\ldots,B_n)\\ 
      &= \E{|P_n|}\log |L|\\
      &\geq n c t.
  \end{align*}
\end{proof}

With the above construction, we are now in position to prove Theorem~\ref{thm:main-ll-positivity}.

\begin{proof}[Proof of Theorem~\ref{thm:main-ll-positivity}]

Suppose by contradiction, for some $0<\alpha ^{\prime }<1$ we have 
\begin{equation*}
h_{\alpha ^{\prime }}(\mu )=\lim \frac{1}{n}\frac{1}{1-\alpha ^{\prime }}\log \sum_{g}\mu^{(n)}(g)^{\alpha ^{\prime }}=0.
\end{equation*}
Equivalently, 
\begin{equation}
\sum_{g}\mu^{(n)}(g)^{\alpha ^{\prime }}=\exp (o(n)).
\label{tail2}
\end{equation}

Let $\alpha =\frac{1+\alpha ^{\prime }}{2}<1$. For a fixed $t>0$, let $A > 0$
be chosen later. Then by Cauchy-Schwarz inequality, 
\begin{align}
\left( \sum_{\pi (g)\geq An}\mu^{(n)}(g)^{\alpha }\exp (\alpha t\pi
(s))\right) ^{2}   
&\leq \left( \sum_{\pi (g)\geq An}\mu^{(n)}(g)^{\alpha ^{\prime }}\right)\left( \sum_{\pi (g)\geq An}\mu^{(n)}(g)\exp (2\alpha t\pi (g))\right) . \label{tail1}
\end{align}

For the first term in the multiplication, we know that 
\begin{equation*}
\sum_{\pi (g)\geq An}\mu^{(n)}(g)^{\alpha ^{\prime }}\leq \sum_{s}\mu^{(n)}
(g)^{\alpha ^{\prime }}=\exp (o(n)).
\end{equation*}

For the second term, we note that $\pi_{\ast }\mu $ is a zero-drift random
walk on $\mathbb{Z}$, since $\mu$ is symmetric. By the Hoeffding bound, there exists a constant $c>0$
only depending on $\pi_{\ast }\mu $ such that 
\begin{equation*}
\pi_\ast\mu^{(n)}(k) = \sum_{\pi (g)=k}\mu^{(n)}(g)\leq \sum_{\pi (g)\geq k}\mu^{(n)}(g)\leq \exp \left(-c%
\frac{k^{2}}{n}\right).  
\end{equation*}

Taking a weighted sum of the above inequality, we have 
\begin{align*}
\sum_{\pi (g)\geq An}\mu^{(n)}(g)\exp (2\alpha t\pi (g)) 
&=\sum_{k=\lceil An\rceil }^{\infty }\sum_{\pi (g)=k}\mu^{(n)}(g)\exp
(2\alpha tk) \\
&\leq \sum_{k=\lceil An\rceil }^{\infty }\exp (-c\frac{k^{2}}{n})\exp
(2\alpha tk) \\
&\leq \sum_{k=\lceil An\rceil }^{\infty }\exp (-ckA)\exp (2\alpha tk) \\
&\leq \frac{M}{cA-2\alpha t}\exp (An(-cA+2\alpha t))
\end{align*}
for some $M$ large enough, and 
given that $cA-2\alpha t>0$. Sending $n\rightarrow \infty $, the last line goes
to $0$ so  
\begin{equation}
\lim_{n\rightarrow \infty }\sum_{\pi (g)\geq An}\mu^{(n)}(g)\exp (2\alpha
t\pi (g))\rightarrow 0.  \label{tail3}
\end{equation}%
(\ref{tail1}) together with (\ref{tail2}) and (\ref{tail3}) yields 
\begin{equation}
\sum_{\pi (g)\geq An}\mu^{(n)}(g)^{\alpha }\exp (\alpha t\pi (g))=\exp (o(n)).
\label{tail4}
\end{equation}

For $t$ small enough, by Proposition \ref{prop:ll}, there exists a constant $c'>0$  such that
\begin{align*}
c't\leq h(\mu _{t})=\lim_{n}\frac{1}{n}\frac{1}{1-\alpha }\log
\sum_{g}C_{t}^{-n\alpha }\mu^{(n)}(g)^{\alpha }\exp (\alpha t\pi (g)).
\end{align*}
As $C_{t}=1+O(t^{2})$, we have
\begin{equation*}
ct+O(t^{2})\leq \lim_{n}\frac{1}{n}\frac{1}{1-\alpha }\log \sum_{g}\mu^{(n)}(g)^{\alpha }\exp (\alpha t\pi (g)).
\end{equation*}

Because the tail is exponentially small as in (\ref{tail4}), we get%
\begin{eqnarray*}
ct+O(t^{2}) &\leq &\lim_{n}\frac{1}{n}\frac{1}{1-\alpha }\log \sum_{\pi
(g)\leq An}\mu^{(n)}(g)^{\alpha }\exp (\alpha t\pi (g)) \\
&\leq &\lim \frac{1}{n}\frac{1}{1-\alpha }\log \sum_{\pi (g)\leq An}\mu^{(n)}(g)^{\alpha }\exp (\alpha tAn) \\
&\leq &\frac{\alpha tA}{1-\alpha }+\lim \frac{1}{n}\frac{1}{1-\alpha }\log
\sum_{\pi (g)\leq An}\mu^{(n)}(g)^{\alpha } \\
&\leq &\frac{\alpha tA}{1-\alpha }+\lim \frac{1}{n}\frac{1}{1-\alpha }\log
\sum_{g}\mu^{(n)}(g)^{\alpha }
\end{eqnarray*}%
We set $A=(4\alpha t)/c$ so that $cA-2\alpha t>0$. Then for $t$ small, 
\begin{equation*}
\frac{\alpha tA}{1-\alpha }=\frac{\alpha }{1-\alpha }\frac{4\alpha }{c}%
t^{2}<ct+O(t^{2}).
\end{equation*}%
Thus, 
\begin{equation*}
h_{\alpha }(\mu )=\lim \frac{1}{n}\frac{1}{1-\alpha }\log \sum_{g}\mu^{(n)}(g)^{\alpha }>0.
\end{equation*}%
However, it follows that $h_{\alpha'}(\mu)\geq h_\alpha(\mu)>0$
as $%
\alpha >\alpha ^{\prime }$, a contradiction to our assumption on $\alpha
^{\prime }$. This concludes our proof. 
\end{proof}

\begin{remark}
    The proof of Theorem \ref{thm:main-ll-positivity} should work more generally for other groups which have $\mathbb{Z}$ as a quotient group and $\mathbb{Z}$ acts interestingly on the group, such as one-dimensional Baumslag-Solitar groups. 
\end{remark}

We next consider a particular case of the lamplighter group where $L = \Z_2$. We calculate the R\'enyi entropy of the 
``switch-walk-switch'' (SWS) random walk on the lamplighter group $\mathbb{Z}_2\wr \mathbb{Z}$, i.e., the $\mu$-random walk where $\mu = \eta * \sigma * \eta$, $\eta(f^1,0)=\eta(0,0) = 1/2$, and $\sigma(0,-1)=\sigma(0,1)=1/2$.
Thus, at every step the
lamplighter switches the lamp at the current location with probability one half, takes a step of the simple random walk and then again flips the lamp at the current
location with probability one half.

\begin{theorem}
\label{thm:sws}
    Let $\mu$ be the SWS walk on $\Z_2 \wr \Z$. Then 

    \begin{align}
    \label{eq:sws}
        h_\alpha(\mu) = \varphi_\alpha\left(\frac{4^{1/\alpha}-4}{4^{1/\alpha}+4}\right) 
    \end{align}
    for all $\alpha \in (0,1]$, where
    \begin{equation}
    \varphi_\alpha(p) 
    = p \log 2 - \frac{\alpha}{2(1-\alpha)}\left[ (1 - p) \log(1-p) + 
    (1 + p) \log(1 + p)\right].
    \end{equation}
\end{theorem}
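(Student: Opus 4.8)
**

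The plan is to compute $h_\alpha(\mu)$ for the SWS walk directly, following the same strategy used for the free group: find an explicit formula for $\mu^{(n)}(g)$, reduce the sum defining $H_\alpha$ to a maximization over a single scalar parameter via a large-deviations estimate, and then extract the limit. The key structural feature to exploit is that the SWS walk decouples into a nice product: since $\mu = \eta * \sigma * \eta$, after $n$ steps the lamplighter's position $\pi(Z_n)$ performs a simple random walk on $\mathbb{Z}$, and conditional on the trajectory of positions, each lamp that has been visited is switched an independent number of times. Because $L = \mathbb{Z}_2$, a lamp visited at least once is uniformly random in $\{0,1\}$ independently of the others, while unvisited lamps stay off. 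So the configuration $\tau(Z_n)$ is, conditionally on the range $R_n$ of the walk, uniform over the $2^{|R_n|}$ configurations supported on $R_n$ (with the final position also recorded).

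\textbf{Key steps.} First I would write $\mu^{(n)}(g)$ explicitly. An element $g = (f, z)$ has positive probability at time $n$ only if $\mathrm{supp}(f) \cup \{0, z\}$ is contained in an interval that is the range of some length-$n$ simple-random-walk path from $0$ to $z$; given the range $R$ (an interval $[\ell, r] \ni 0, z$), the probability is $\Pr{R_n = R, \pi(Z_n)=z} \cdot 2^{-|R|}$, since each of the $|R|$ visited lamps is independently fair. Summing $\mu^{(n)}(g)^\alpha$ over all $g$ then factorizes: for each realized range-interval of size $m$ (equivalently each pair $(\ell,r)$ and endpoint $z$), the inner sum over the $2^m$ lamp configurations contributes a factor $2^m \cdot (2^{-m})^\alpha = 2^{m(1-\alpha)}$. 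Thus
\begin{align*}
    \sum_g \mu^{(n)}(g)^\alpha = \sum_{R, z} \Pr{R_n = R, \pi(Z_n) = z}^\alpha \, 2^{|R|(1-\alpha)}.
\end{align*}
Second, I would apply the standard large-deviations estimate for the range and endpoint of simple random walk: the probability that the walk ends at $z \approx un$ having swept out a range of size $\approx pn$ decays like $e^{-n I(u,p) + o(n)}$ for an explicit rate function $I$, after which the sum over $(R,z)$ collapses (up to a polynomial factor that vanishes after dividing by $n$ and taking logs) to a maximization over the scaled range-fraction $p \in [0,1]$. Third, taking $\frac{1}{n}\log$ and the limit gives $h_\alpha(\mu) = \frac{1}{1-\alpha}\max_p \big[ (1-\alpha) p \log 2 - \alpha \, \tilde I(p)\big]$ for the appropriate one-dimensional rate function $\tilde I$; matching constants should reproduce $\varphi_\alpha$ and locate the maximizer at $p^* = (4^{1/\alpha}-4)/(4^{1/\alpha}+4)$, established by setting the derivative to zero and checking concavity.

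\textbf{Main obstacle.} The delicate part is the combinatorial/large-deviation analysis of the \emph{range} of the simple random walk, which is subtler than the endpoint distribution used in the free-group case. The cleanest route is to parametrize a path by its maximum $r \geq 0$ and minimum $-\ell \leq 0$, so that the range is $m = r + \ell + 1$ and the endpoint lies in $[-\ell, r]$; the number (or probability) of length-$n$ walks confined to $[-\ell, r]$ with prescribed extremes admits a reflection-principle formula whose exponential growth rate is governed by the principal eigenvalue $\cos\big(\tfrac{\pi}{m+1}\big)$ of the path graph on $m$ sites, giving a confinement cost $\approx e^{-n(1 - \cos(\pi/m))}$. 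Balancing this confinement cost $\tfrac{\pi^2}{2(m/n)^2 n}$-type penalty against the entropic gain $2^{m(1-\alpha)}$ from the lamp configurations is what produces a nontrivial optimal range-fraction. I expect that carefully assembling this estimate—ensuring the range rate function has the right form so that the optimization yields exactly $\varphi_\alpha(p^*)$—is where the real work lies; once the rate function is pinned down, the optimization and the verification that the answer is analytic on $(0,1)$ and matches $h_\alpha = 0$ at $\alpha = 1$ are routine.
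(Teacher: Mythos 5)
Your overall skeleton matches the paper's: reduce $\sum_g \mu^{(n)}(g)^\alpha$ to a statement about the range of the embedded simple random walk (using that, conditionally on the visited interval, the lamps there are i.i.d.\ uniform), then optimize over the scaled range-fraction $p$. But the key analytic input you propose is the wrong one, and with it the optimization would not produce $\varphi_\alpha$. You invoke the \emph{confinement} estimate --- the probability that a length-$n$ walk stays inside an interval of $m$ sites, governed by the principal eigenvalue $\cos(\pi/(m+1))$, with cost $\approx \ee^{-n(1-\cos(\pi/m))}$. That controls the \emph{lower} tail $\Pr{R-L+1 \le m}$, and for $m = pn$ with $p>0$ fixed this cost is $n\cdot\tfrac{\pi^2}{2p^2n^2} = O(1/n)$, i.e.\ sub-exponential and hence invisible at the exponential scale of the problem. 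The quantity you actually need is the \emph{upper} tail: since the typical range is $O(\sqrt{n})$, demanding a range of order $pn$ is itself a large deviation, and its rate is $\lim_n \tfrac1n \log \Pr{R-L \ge pn} = \psi(p) = -\tfrac12\bigl[(1+p)\log(1+p)+(1-p)\log(1-p)\bigr]$ (Hamana--Kesten; it coincides with the Cram\'er rate for the endpoint, since the cheapest way to sweep out $pn$ sites is to travel ballistically). It is the balance of $2^{(1-\alpha)pn}$ against $\ee^{\alpha n \psi(p)}$ that yields $\varphi_\alpha$ and the maximizer $p^*_\alpha = (4^{1/\alpha}-4)/(4^{1/\alpha}+4)$; your confinement penalty and the lamp entropy do not even live at the same scale, so your proposed balancing has no solution of the right form.

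A secondary, repairable issue: your claimed identity $\sum_g \mu^{(n)}(g)^\alpha = \sum_{R,z}\Pr{R_n=R,\,\pi(Z_n)=z}^\alpha\, 2^{|R|(1-\alpha)}$ is not an equality, because $\mu^{(n)}(f,z)$ is a \emph{sum} over all ranges containing $\supp(f)\cup\{0,z\}$, not a single term. For $\alpha\in(0,1)$ one gets the upper bound from subadditivity of $x\mapsto x^\alpha$ and a matching lower bound by restricting to a dominant pair $(\ell,r)$ chosen by pigeonhole (this is exactly how the paper handles it), so the two sides agree up to $\ee^{o(n)}$ factors --- fine for the limit, but it needs to be argued rather than asserted as an identity.
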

Note that Theorem~\ref{thm:main-ll} is an immediate consequence, since Theorem~\ref{thm:sws} shows that $h_\alpha(\mu)$ is an elementary function on $(0,1]$. 
\begin{proof}[Proof of Theorem~\ref{thm:sws}]
Fix $n$. We first express the asymptotic R\'enyi entropies in terms of the number of positions the lamplighter ever visits up to time $n$. 
Let $S_{m}=\pi (Z_{m})$ be the position of the lamplighter at time $m$. Note that $S_m$ is a simple random walk on $\mathbb{Z}$. Let $L:=\inf \left\{ S_{m},0\leq m \leq n\right\} $ and $R:=\sup \left\{ S_{m},0\leq m\leq n\right\} $. The number of positions the lamplighter ever
visits up to time $n$ is then $R-L+1$. As $L,R \in [-n,n]$, for each $k$ there exists $\ell_k,r_k$ with $r_k-\ell_k+1=k$ such that 
\begin{align}
\label{eq:LR}
\Pr{L=\ell_k,R=r_k}\geq \frac{1}{5n^{2}}\Pr{R-L+1=k}.
\end{align}

We slightly abuse notation and let $\mu^{(n)}(f):=\sum_x \mu^{(n)}(f,x)$  denote the probability that the lamp configuration at time $n$ is equal to $f$. 
Fix $\alpha \in (0,1)$. Then
\begin{align*}
\log \sum_{g}\mu ^{(n)}(g)^{\alpha } 
=\log \sum_{f}\sum_{x}\mu^{(n)}(f,x)^{\alpha } 
\geq \log \sum_{f}\mu ^{(n)}(f)^{\alpha },
\end{align*}
since $q^\alpha+p^\alpha \geq (p+q)^\alpha$ for all $p,q \in [0,1]$. For each $k$, we can further bound this from below by restricting the sum: 
\begin{align*}
\log \sum_{g}\mu ^{(n)}(g)^{\alpha } 
 \geq \log \sum_{\text{supp}(f)\subseteq [ r_k,\ell_k]}\mu^{(n)}(f)^{\alpha }.
\end{align*}
We note that conditional on $L$ and $R$, the lamp configuration follows a uniform distribution over $\{0,1\}^{[L,R]}\times \{0\}^{(-\infty ,\infty )\setminus [L,R]}$. 
Hence, for $\text{supp}(f)\subseteq [ \ell_k,r_k]$, 
\begin{align*}
    \mu^{(n)}(f)=\Pr{\tau(Z_n)=f} \geq \Pr{\tau(Z_n)=f, R=r_k, L=\ell_l} = 2^{-k}\Pr{R=r_k, L=\ell_k}.
\end{align*}
Thus, since the sum over all such $f$ has $2^k$ summands,
\begin{align}\label{eq:llineq}
\log \sum_{g}\mu ^{(n)}(g)^{\alpha } 
&\geq \max_{k}\log \left[\left( \Pr{L=\ell_k,R=r_k}\cdot 2^{-k}\right) ^{\alpha }2^{k}\right] \nonumber \\
&=\max_{k}\log \left[\left( \Pr{R-L+1=k} \cdot 2^{-k}\right) ^{\alpha}2^{k}\right]+o(n).
\end{align}

For the other direction,
\begin{align*}
\sum_{g}\mu ^{(n)}(g)^{\alpha } 
 = \sum_{f}\sum_{x \in [-n,n]}\mu ^{(n)}(f,x)^{\alpha } \leq \sum_{f}\sum_{x\in [-n,n]}\mu ^{(n)}(f)^{\alpha }=(2n+1)\sum_{f}\mu ^{(n)}(f)^{\alpha }
\end{align*}
since $\mu^{(n)}(f,x) \leq \mu^{(n)}(f)$. For a given configuration $f:\mathbb{Z}\rightarrow \mathbb{Z}/2\mathbb{Z}$, let $r(f)$ and $\ell(f)$ denote the rightmost and leftmost
position on which lamps are on. Then
\begin{align*}
    \mu^{(n)}(f) = \Pr{\tau(Z_n) = f} 
    &= \sum_{m,s}\Pr{R=m,L=s,\tau(Z_n)=f}\\
    &= \sum_{m\geq r(f),s\leq \ell(f)}\Pr{R=m,L=s}2^{-(m-s+1)}.
\end{align*}
Hence
\begin{align*}
\sum_{g}\mu ^{(n)}(g)^{\alpha } 
&\leq(2n+1)\sum_{f}\left( \sum_{m\geq r(f),s\leq \ell(f)}\Pr{R=m,L=s}\cdot 2^{-(m-s+1)}\right) ^{\alpha } \\
&\leq (2n+1)\sum_{f}\sum_{m\geq r(f),s\leq \ell(f)}\Pr{R=m,L=s}^{\alpha }\cdot 2^{-(m-s+1)\alpha },
\end{align*}
where the second inequality again uses $(p+q)^\alpha \leq q^\alpha+p^\alpha$.

We exchange the order of summation and note that for each $m,s$ there are $2^{m-s+1}$ terms, to get
\begin{align*}
\sum_{f}\sum_{x}\mu ^{(n)}(f,x)^{\alpha } &\leq (2n+1)\sum_{m,s}\Pr{R=m,L=s}^{\alpha }2^{(m-s+1)(1-\alpha) }.
\end{align*}
Since this sum has $(n+1)^2$ summands,
\begin{align*}
\sum_{f}\sum_{x}\mu ^{(n)}(f,x)^{\alpha }
&\leq 4n^3\max_{m,s}\Pr{R=m,L=s}^{\alpha }2^{(m-s+1)(1-\alpha)} \\
&\leq 4n^3\max_{k} \Pr{R-L+1=k}^\alpha \cdot 2^{k(1-\alpha)}.
\end{align*}
Together with (\ref{eq:llineq}), we have
\begin{equation}
    \label{eq:lleq}\log \sum_{g}\mu ^{(n)}(g)^{\alpha } 
=\max_{k}\log \left[ \Pr{R-L=k}^\alpha \cdot 2^{k(1-\alpha)}\right]+o(n).
\end{equation}

We claim that we can replace $\Pr{R-L=k}$ in the equation above with $\Pr{R-L\geq k}$. On the one hand, we trivially have $\Pr{R-L=k}\leq \Pr{R-L\geq k}$, yielding one direction. For the other direction, we note that for any $k'\geq 0$ there exists $k^*\geq k'$ such that
\[\Pr{R-L=k^*}\geq \frac{1}{n}\Pr{R-L\geq k'}\]
by the pigeonhole principle. Therefore, for any $k'$ there is a $k^*\geq k'$ such that  
\begin{align*}
   \max_k\log \left[\Pr{R-L=k}^\alpha \cdot 2^{k(1-\alpha)}\right] 
   &\geq \log \left[\Pr{R-L=k^*}^\alpha \cdot 2^{k^*(1-\alpha)}\right]\\
   &\geq \log \left[\frac{1}{n^\alpha}\Pr{R-L\geq k'}^\alpha \cdot 2^{k'(1-\alpha)}\right]\\
   &= \log \left[\Pr{R-L\geq k'}^\alpha \cdot 2^{k'(1-\alpha)}\right]+\log \frac{1}{n^\alpha}.
\end{align*}
Taking the supremum over $k'$ yields 
\[\max_k\log \left[\Pr{R-L=k}^\alpha \cdot 2^{k(1-\alpha)}\right] \geq \max_k\log \left[\Pr{R-L\geq k}^\alpha \cdot 2^{k(1-\alpha)}\right]+o(n). \]
Now (\ref{eq:lleq}) becomes 
\[\log \sum_{g}\mu ^{(n)}(g)^{\alpha } 
=\max_{k}\log \left[\Pr{R-L\geq k}^\alpha\cdot 2^{k(1-\alpha)}\right]+o(n).\]

Citing Theorem 1 and Remark 1 in \cite{HamanaKesten2002simple}, we have that for any $x\in [0,1]$
\[\lim_n \frac{1}{n}\log\Pr{R-L\geq nx}=\psi(x),\]where $\psi(x)=-\frac{1}{2}(1+x)\log(1+x)-\frac{1}{2}(1-x)\log(1-x).$
We note that when a sequence of decreasing functions converges pointwise to a continuous function, the sequence also converges uniformly. Therefore, we can write 
\[\frac{1}{n}\log\Pr{R-L\geq k}=\psi(k/n)+o(1),\] where $o(1)$ vanishes uniformly in $k$. We have
\begin{align*}
    \frac{1}{n}\log \sum_{g}\mu ^{(n)}(g)^{\alpha } 
&=\frac{1}{n}\max_{k}\log \left[\Pr{R-L\geq k}^\alpha \cdot 2^{k(1-\alpha)}\right]+o(1)\\
&=\frac{1}{n}\max_{k}\left[\alpha\log \Pr{R-L\geq k}+k(1-\alpha)\log2\right] +o(1)\\
&=\max_{k}\left[\alpha\psi(\frac{k}{n})+\frac{k}{n}(1-\alpha)\log2\right] +o(1).
\end{align*}

Finally, let $p:=\frac{k}{n}$. Then 
\begin{align*}
   h_\alpha(\mu)&=\frac{1}{1-\alpha} \lim  \frac{1}{n}\log \sum_{g}\mu ^{(n)}(g)^{\alpha }\\
   &=\frac{1}{1-\alpha} \lim_n \max_{p\in \{0/n, 1/n,...,n/n\}}\left[\alpha \psi(p)+p(1-\alpha)\log2\right]\\
   &= \max_{p\in [0,1]}\left[\frac{\alpha}{1-\alpha} \psi(p)+p\log2\right]\\
   &=\max_{p\in [0,1]} \varphi_\alpha(p)
\end{align*}
where
\begin{align*}
    \varphi_\alpha(p) 
    &=\frac{\alpha}{1-\alpha}  \psi(p)+p\log2\\
    &= p \log 2 - \frac{\alpha}{2(1-\alpha)}\left[ (1 - p) \log(1-p) + 
    (1 + p) \log(1 + p)\right].
\end{align*}
Some computations show that the above expression is maximized when 
\begin{equation*}
    p_{\alpha }^{\ast }=\frac{4^{1/\alpha}-4}{4^{1/\alpha}+4}.
\end{equation*}
The continuity of the R\'enyi entropy from the left extends the result to $h_1$. This completes the proof of Theorem~\ref{thm:sws}.
\end{proof}

\subsection{The asymptotic min-entropy}

The next claim provides an example of a non-symmetric random walk on an amenable group for which the asymptotic min-entropy $h_\infty$ is positive. 
Consider a drifting SWS walk $\mu_\beta$ on $\Z_2\wr \Z$. That is, let $\mu_\beta$ be given by $\mu_\beta = \eta * \sigma * \eta$, where $\eta(f^1,0)=\eta(0,0) = 1/2$, and $\sigma(0,-1)=\frac{1}{2}(1-\beta)=1-\sigma(0,1)$ for some $\beta \in (-1,1)$. The drift of the walker is $\sum_g \mu_\beta(g)\pi(g) = \beta$.
\begin{claim}
\label{clm:h-infty-amenable}
    If $\beta \neq 0$ then $h_\infty(\mu_\beta)>0$.
\end{claim}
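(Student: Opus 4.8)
The plan is to reduce the asymptotic min-entropy to a large-deviation estimate for the \emph{range} of the underlying drifting walk on $\Z$. Since $h_\infty(\mu_\beta)=\lim_n-\frac1n\log\max_g\mu_\beta^{(n)}(g)$, it suffices to produce a constant $\delta>0$ with $\max_g\mu_\beta^{(n)}(g)\le e^{-\delta n}$ for all large $n$. By the order-reversing automorphism $z\mapsto-z$ of $\Z_2\wr\Z$, under which $h_\infty$ is invariant, I may assume $\beta>0$.

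First I would rewrite $\mu_\beta^{(n)}$ in terms of the lamplighter's trajectory. Write $S_m=\pi(Z_m)$, a nearest-neighbour walk on $\Z$ with upward step probability $\frac{1+\beta}2$ and hence $\E{S_n}=\beta n$, and set $R_n=\max_{m\le n}S_m$ and $L_n=\min_{m\le n}S_m$. The structural point I would establish is that, conditioned on the trajectory $(S_m)_{m\le n}$, every integer of $[L_n,R_n]$ is visited and receives at least one independent fair lamp-flip (each SWS step flips the lamp at the departure and arrival sites with probability $\tfrac12$), so $F_n=\tau(Z_n)$ is conditionally uniform on $\{0,1\}^{[L_n,R_n]}$. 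This yields, for every $g=(f,x)$,
\[
\mu_\beta^{(n)}(f,x)=\E{\ind{S_n=x}\,\ind{\supp(f)\subseteq[L_n,R_n]}\,2^{-(R_n-L_n+1)}}\le\E{2^{-R_n}},
\]
where the inequality uses $L_n\le0$; in particular $\max_g\mu_\beta^{(n)}(g)\le\E{2^{-R_n}}$.

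It then remains to show $\E{2^{-R_n}}$ decays exponentially, which is exactly where the drift is used. Fixing $0<\eps<\beta$, I would split according to whether $R_n>\eps n$: on this event the integrand is at most $2^{-\eps n}$, while on its complement $R_n\ge S_n$ forces $S_n\le\eps n$, a deviation below the mean $\beta n$, so Cram\'er's theorem (or the Hoeffding bound already used in the paper) gives $\Pr{R_n\le\eps n}\le\Pr{S_n\le\eps n}\le e^{-nJ(\eps)}$ with $J(\eps)>0$. Hence $\E{2^{-R_n}}\le2^{-\eps n}+e^{-nJ(\eps)}$, and $h_\infty(\mu_\beta)\ge\delta:=\min(\eps\log2,J(\eps))>0$.

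The step requiring the most care is the conditional-uniformity identity for $\mu_\beta^{(n)}$: one must check that conditioning on the walker's path genuinely randomizes the lamp at \emph{every} site of the visited interval, so that each group element carries mass at most $2^{-|\mathrm{range}|}$. The rest is a routine Cram\'er estimate, and the conceptual content is simply that a nonzero drift spreads the range linearly in $n$, precluding any single lamp configuration from holding more than exponentially small probability.
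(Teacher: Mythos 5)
Your proof is correct and follows essentially the same route as the paper's: both exploit the conditional uniformity of the lamps on the visited interval together with a large-deviation/Chernoff bound showing that the drift forces that interval to have length linear in $n$. The only cosmetic difference is that you control atoms via the range $R_n-L_n$ (conditioning on the whole trajectory) while the paper uses the terminal position $\pi(Z_n)$; either works.
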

\begin{proof}
    By symmetry, we can assume without loss of generality that $\beta>0$. Let $Z_1,Z_2,\ldots$ be the $\mu_\beta$-random walk. Then, by the Chernoff bound, there is some $r>0$ such that $\Pr{\pi(Z_n) < n\beta/2} \leq \ee^{-rn}$. In particular, if $\pi(g) \leq n\beta/2$ then $\mu_\beta(g) \leq \ee^{-rn}$.
    
    Now, conditioned on $\pi(Z_n)=k \geq 0$, it holds by the definition of the SWS walk that the lamp configuration restricted to $\{0,1,\ldots,k-1\}$ is distributed uniformly, so that each configuration has probability $2^{-k}$. Thus, for $k \geq n\beta/2$ and $g$ such that $\pi(g)=k$, 
    \begin{align*}
        \mu_\beta(g)\leq 2^{-n\beta/2}=\ee^{-\frac{1}{2}\log(2)\beta n}.
    \end{align*}

    It follows that if we let $c = \min\{\frac{1}{2}\log(2)\beta,r\}$, then $\mu_\beta(g) \leq \ee^{-cn}$ for all $g \in G$, and $h_\infty(\mu) \geq c$.
\end{proof}

Next, we show that for non-degenerate random walks on non-amenable groups the asymptotic min-entropy is always positive. Note that this follows from Kesten's Theorem for symmetric $\mu$.
\begin{claim}
\label{clm:h-infty-nonamenable}
    Let $\mu$ be a finitely supported, non-degenerate probability measure on a non-amenable group $G$. Then $h_\infty(\mu) > 0$.
\end{claim}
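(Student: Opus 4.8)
The plan is to reduce the claim to the exponential decay of a single operator-theoretic quantity. Let $P$ denote the convolution operator $f\mapsto\mu*f$ on $\ell^2(G)$, so that $P^n\delta_e=\mu^{(n)}$ and $P^*=P_{\check\mu}$. Since $\lVert f\rVert_\infty\le\lVert f\rVert_2$ for counting measure,
\[
\max_g\mu^{(n)}(g)=\lVert\mu^{(n)}\rVert_\infty\le\lVert\mu^{(n)}\rVert_2=\lVert P^n\delta_e\rVert_2\le\lVert P^n\rVert .
\]
By Gelfand's formula $\lim_n\lVert P^n\rVert^{1/n}=r(P)$, the spectral radius of $P$, so $h_\infty(\mu)\ge-\log r(P)$. (Equivalently, this is the bound $h_\infty\ge\tfrac12 h_2$ from Lemma~\ref{clm:alpha_infty_bound}, since $\lVert\mu^{(n)}\rVert_2^2=\sum_g\mu^{(n)}(g)^2$ is the collision probability $\mathbb{P}[Z_n=Z_n']$.) It therefore suffices to prove $r(P)<1$ whenever $G$ is non-amenable and $\mu$ is non-degenerate.

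The hard part will be exactly this last step, and the crux is a phenomenon special to non-symmetric walks: one must use the spectral radius $r(P)$ rather than the operator norm $\lVert P\rVert$. Indeed $\lVert P\rVert^2=\lVert P^*P\rVert=\lVert P_{\check\mu*\mu}\rVert$ is the norm of the symmetric operator attached to $\check\mu*\mu$, a measure supported on the subgroup $\langle(\operatorname{supp}\mu)^{-1}\operatorname{supp}\mu\rangle$, which may well be amenable even when $G$ is not; in such cases $\lVert P\rVert=1$ and the naive bound $\max_g\mu^{(n)}(g)\le\lVert P\rVert^n$ is useless. So the decay of $\lVert P^n\delta_e\rVert$ on the specific vector $\delta_e$ cannot be read off from $\lVert P\rVert$, and one is forced to the genuinely non-symmetric Kesten-type statement $r(P)<1\iff G$ non-amenable.

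To prove $r(P)<1$ I would argue by contradiction. If $r(P)=1$, then (as $P$ is a contraction) some boundary spectral value $\lambda_0$, $|\lambda_0|=1$, lies in the approximate point spectrum, yielding unit vectors $\xi_n$ with $\lVert P\xi_n-\lambda_0\xi_n\rVert\to0$, hence $\lVert P\xi_n\rVert\to1$. Writing $P\xi_n=\sum_g\mu(g)\lambda(g)\xi_n$ as an average of the isometries $\lambda(g)$ and using the Hilbert-space identity $1-\lVert P\xi_n\rVert^2=\sum_{g<g'}\mu(g)\mu(g')\lVert\lambda(g)\xi_n-\lambda(g')\xi_n\rVert^2$, uniform convexity forces $\lVert\lambda(g)\xi_n-\lambda(g')\xi_n\rVert\to0$ for $g,g'\in\operatorname{supp}\mu$, i.e.\ the $\xi_n$ are almost invariant under $\langle(\operatorname{supp}\mu)^{-1}\operatorname{supp}\mu\rangle$. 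The main obstacle is that this only exhibits that subgroup as amenable; to reach a contradiction with non-amenability of $G$ one must additionally reconcile these almost-invariant vectors with the approximate $\lambda_0$-eigenvector condition along a generator $s_0\in\operatorname{supp}\mu$, which propagates almost-invariance to all of $G$ and contradicts Kesten's theorem~\cite{kesten1959symmetric} for the symmetrized walk. Making this propagation rigorous—showing that the "$s_0$-direction" cannot coexist with subgroup-almost-invariance on a non-amenable $G$—is where the real work lies, and I would expect to invoke the full amenability characterization via almost-invariant vectors in the regular representation to close it.
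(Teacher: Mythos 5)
Your reduction $h_\infty(\mu)\ge -\log r(P)$ is correct and coincides with the easy half of the paper's argument: the paper likewise bounds $\max_g\mu^{(n)}(g)\le\Vert M^n\delta_e\Vert_2$ for the Markov operator $M=\sum_h\mu(h)R_h$. The real difference is in the hard half. The paper disposes of it in one line by citing \cite[Theorem 12.5]{woess2000random} for the assertion that the \emph{operator norm} of $M$ is strictly less than $1$ for non-degenerate (possibly non-symmetric) $\mu$ on a non-amenable group, and then uses $\Vert M^n\delta_e\Vert\le\Vert M\Vert^n$. Your objection to exactly this point is well taken: the operator norm alone can equal $1$. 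For instance, on $G=\Z_2*\Z_3=\langle a,b\mid a^2,b^3\rangle$ the measure $\mu=\frac12(\delta_a+\delta_b)$ is non-degenerate (since $a^{-1}=a$ and $b^{-1}=b^2$), yet $M^*M=\frac12 I+\frac14\left(R_c+R_{c^{-1}}\right)$ with $c=a^{-1}b=ab$ of infinite order, so $\Vert M\Vert=1$ while $G$ is non-amenable. The quantity that is genuinely less than $1$ is the spectral radius $r(M)$, or equivalently $\Vert M^{n}\Vert^{1/n}$ for suitable $n$, exactly as you insist; on this point your framing is more careful than the paper's.

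That said, as submitted your argument has a genuine gap: the entire content of the claim is the inequality $r(P)<1$, and you explicitly stop short of proving it. The good news is that your sketch does close. From $\Vert P\xi_n-\lambda_0\xi_n\Vert\to0$ and your convexity identity you get $\Vert R_s\xi_n-\lambda_0\xi_n\Vert\to0$ for every $s\in\supp\mu$ (each $R_s\xi_n$ is a unit vector close to the average of the $R_{s'}\xi_n$, which is close to $\lambda_0\xi_n$). For a fixed $g=s_1\cdots s_k$ with $s_i\in\supp\mu$ the error accumulates only linearly, $\Vert R_g\xi_n-\lambda_0^k\xi_n\Vert\le k\max_s\Vert R_s\xi_n-\lambda_0\xi_n\Vert\to0$; since $\supp\mu$ generates $G$ as a semigroup this yields $|\langle R_g\xi_n,\xi_n\rangle|\to1$ for every fixed $g\in G$, and passing to $|\xi_n|$ gives genuine almost-invariant unit vectors for the regular representation of $G$, contradicting non-amenability. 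Alternatively, and more cheaply: $\Vert P^n\Vert^2=\Vert P_{\check\mu^{(n)}*\mu^{(n)}}\Vert$, the sets $(\supp\mu^{(n)})^{-1}\supp\mu^{(n)}$ increase with $n$ and, because $\supp\mu$ generates $G$ as a semigroup, eventually generate a finite-index (hence non-amenable) subgroup of $G$; the symmetric Kesten theorem then gives $\Vert P^{n}\Vert<1$ for some fixed $n$, hence $r(P)<1$. Either completion needs to be written out before this can be called a proof.
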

To prove this claim we recall some basic definitions. Let $\ell^2(G)$ be the Hilbert space of real square-integrable functions on $G$, equipped with the standard inner product and norm. For $h \in G$, let $R_h \colon \ell^2(G) \to \ell^2(G)$ be the right shift operator given by $[R_h(\varphi)](g) = \varphi(g h)$. Then $R_g$ is an orthogonal linear operator and $h \mapsto R_h$ is the right regular representation of $G$. Let $M = \sum_h \mu(h)R_h$ be the Markov operator of the $\mu$-random walk. As is well known (see, e.g., \cite[Theorem 12.5]{woess2000random}) when $G$ is non-amenable and $\mu$ is non-degenerate, then the operator norm $\rho(M)$ is strictly less than $1$, i.e., $\rho(M) := \sup \{\Vert M\varphi\Vert \,:\, \Vert \varphi \Vert = 1\} < 1$. Note that this holds even when $\mu$ is not symmetric. 
\begin{proof}[Proof of Claim~\ref{clm:h-infty-nonamenable}]
    Note that $[M^n \varphi](g) = \sum_h \mu^{(n)}(h)\varphi(g h)$, and so $[M^n\delta_e](g) = \mu^{(n)}(g^{-1})$. Since $\rho(M^n) \leq  \rho(M)^n$, $\Vert M^n \delta_e \Vert \leq \rho(M)^n$, and so
    \begin{align*}
        \rho(M)^{2n} \geq \sum_g \mu^{(n)}(g^{-1})^2 \geq \max_g \mu^{(n)}(g)^2.
    \end{align*}
    Hence
    \begin{align*}
        -\log \rho(M) \leq -\frac{1}{n}\log\max_g \mu^{(n)}(g).
    \end{align*}
    Taking the limit as $n$ tends to infinity yields that
    \begin{align*}
        -\log\rho(M) \leq h_\infty(\mu),
    \end{align*}
    and so, since $\rho(M) < 1$, we have proved the claim.
\end{proof}

\bibliography{refs}
\end{document}